\newcommand{\trop}{\operatorname{trop}}
\newcommand{\rk}{\operatorname{rk}}
\newcommand{\Gr}{\operatorname{Gr}}
\newcommand{\Fl}{\operatorname{Fl}}
\newcommand{\im}{\operatorname{im}}
\newcommand{\pos}{\operatorname{pos}}
\newcommand{\supp}{\operatorname{supp}}
\newcommand{\Mon}{\operatorname{Mon}}
\newenvironment{manualtheorem}[1]{%
	\manualtheoreminner
}{\endmanualtheoreminner}
\newtheorem{theorem}{Theorem}[section]
\newtheorem{proposition}[theorem]{Proposition}
\newtheorem{proposition definition}[theorem]{Proposition-Definition}
\newtheorem{corollary}[theorem]{Corollary}
\newtheorem{lemma}[theorem]{Lemma}
\theoremstyle{definition}
\newtheorem{definition}[theorem]{Definition}
\newtheorem{example}[theorem]{Example}
\newtheorem{remark}[theorem]{Remark}
\newtheorem{question}[theorem]{Question}
\newtheorem{problem}[theorem]{Problem}
\title{Realizability of matroid quotients}
\author{Alessio Borzì}
\begin{document}
	
	\maketitle
	
	{\raggedleft \footnotesize \textit{Vita quieta, mente lieta, moderata dieta.} \par}
	
	\begin{abstract}
		We characterize the realizability of a quotient of matroids, over an infinite field $K$, in terms of the realizability over $K$ of a single matroid associated to it, called the \emph{Higgs major}. This result extends to realizability of flag matroids. Further, we provide some applications to the relative realizability problem for Bergman fans in tropical geometry.
	\end{abstract}
	
	\section{Introduction}
	
	Matroids are central objects in modern combinatorics. A matroid $M$ can be seen as a combinatorial abstraction of a linear space. In a similar way, a \emph{matroid quotient} $M \twoheadrightarrow N$ can be seen as a combinatorial abstraction of an inclusion of two linear spaces. Even if $M$ and $N$ are realizable matroids, this does not guarantee that the quotient as a whole is realizable. An example of this was given in \cite[§1.7.5. Example 7]{borovik2003coxeter}, and is as follows. Let $P$ be the non-Pappus matroid, and let $e$ be one of the points of the "non-line". Then, the contraction $P / e$ is a matroid quotient of the deletion $P \setminus e$ and both matroids are realizable. However, the matroid quotient $P \setminus e \twoheadrightarrow P /e$ is not realizable. It is interesting to note that the non-realizability of the above quotient depends on the non-realizability of the non-Pappus matroid.
	
	\begin{center}
			\begin{tikzpicture}
				\tikzstyle{circle}=[scale=0.4, shape=circle,fill=blue]
		
				
				\node (0) at (0,3) [circle] {};
				\node (1) at (2,3.25) [circle] {};
				\node (2) at (4,3.5) [circle] {};
				\node (3) at (0,0.5) [circle] {};
				\node (4) at (2,0.25) [circle] {};
				\node (5) at (4,0) [circle] {};
				\node (6) at (0.9,1.75) [circle] {};
				\node (7) at (1.7,1.75) [circle] {};
				\node (8) at (2.925,1.75) [circle] {};
				
				\draw[blue, thick] (0) -- (4);
				\draw[blue, thick] (2) -- (4);
				\draw[blue, thick] (1) -- (5);
				\draw[blue, thick] (1) -- (3);
				\draw[blue, thick] (0) -- (5);
				\draw[blue, thick] (2) -- (3);
				\draw[blue, thick] (0) -- (2);
				\draw[blue, thick] (3) -- (5);
				
				\node[blue] (A) at (0,3.35) {1};
				\node[blue] (B) at (2,3.6) {2};
				\node[blue] (C) at (4,3.85) {3};
				\node[blue] (D) at (0,0.85) {4};
				\node[blue] (E) at (2,0.6) {5};
				\node[blue] (F) at (4,0.35) {6};
				\node[blue] (G) at (0.9,2.1) {7};
				\node[blue] (H) at (1.7,2.1) {8};
				\node[blue] (I) at (2.925,2.1) {e};
				
				\node[blue] at (2, -0.5) {$P$};
				
				
				\tikzstyle{circle}=[scale=0.4, shape=circle,fill=black]
				
			 \def\x{4};
			 \def\y{5.5};
				
				\node (0) at (0-\x,3-\y) [circle] {};
				\node (1) at (2-\x,3.25-\y) [circle] {};
				\node (2) at (4-\x,3.5-\y) [circle] {};
				\node (3) at (0-\x,0.5-\y) [circle] {};
				\node (4) at (2-\x,0.25-\y) [circle] {};
				\node (5) at (4-\x,0-\y) [circle] {};
				\node (6) at (0.9-\x,1.75-\y) [circle] {};
				\node (7) at (1.7-\x,1.75-\y) [circle] {};
				
				\draw (0) -- (4);
				\draw (1) -- (3);
				\draw (0) -- (5);
				\draw (2) -- (3);
				\draw (0) -- (2);
				\draw (3) -- (5);
				
				\node (A) at (0-\x,3.35-\y) {1};
				\node (B) at (2-\x,3.6-\y) {2};
				\node (C) at (4-\x,3.85-\y) {3};
				\node (D) at (0-\x,0.85-\y) {4};
				\node (E) at (2-\x,0.6-\y) {5};
				\node (F) at (4-\x,0.35-\y) {6};
				\node (G) at (0.9-\x,2.1-\y) {7};
				\node (H) at (1.7-\x,2.1-\y) {8};
				
				\node at (2-\x, -0.5 - \y) {$P \setminus e$};
				
				
				\def\x{4};
				\def\y{-5.5};
				
				\node (0) at (0+\x,1.42+\y) [circle] {};
				\node (1) at (1+\x,1.42+\y) [circle] {};
				\node (2) at (2+\x,1.42+\y) [circle] {};
				\node (3) at (3+\x,1.42+\y) [circle] {};
				\node (4) at (4+\x,1.42+\y) [circle] {};
				\node (5) at (5+\x,1.42+\y) [circle] {};
				
				\draw (0) -- (5);
				
				\node (A) at (0+\x,1.77+\y) {1};
				\node (B) at (1+\x,1.77+\y) {4};
				\node (C) at (2+\x,1.77+\y) {7};
				\node (D) at (3+\x,1.77+\y) {8};
				\node (E) at (4+\x,1.77+\y) {2,6};
				\node (F) at (5+\x,1.77+\y) {3,5};
				
				\node at (2.5+\x, -0.5 + \y) {$P / e$};
				
				
				\draw[blue] [->] (-1,-1.5) -- (0,-0.5);
				\draw[blue] [->] (4,-0.5) -- (5,-1.5);
				\draw [->>] (1,1.5+\y) -- (3, 1.5+\y);
				
			\end{tikzpicture}
	\end{center}
	
	The aim of this paper is to characterize the realizability of a matroid quotient in terms of the realizability of a matroid associated to it. We will do so by using the notions of \emph{factorization} and \emph{major} of a matroid quotient, both appearing in a work of Kennedy \cite{kennedy1975majors}. The matroid we will associate to a matroid quotient is called the \emph{Higgs major} (see also \cite{Kung1986bookchapter}).
	
	\begin{manualtheorem}{A}[Theorem \ref{thm: characterization of realizability of quotients}]
		Let $f:M_1 \twoheadrightarrow M_2$ be a quotient of matroids, and let $K$ be an infinite field. The following statements are equivalent:
		\begin{enumerate}
			\item $f$ is realizable over $K$,
			\item the Higgs factorization of $f$ is realizable over $K$,
			\item the Higgs major of $f$ is realizable over $K$.
		\end{enumerate}
	\end{manualtheorem}
	
	A natural context in which to consider matroid quotients is flag varieties. A sequence of matroid quotients $M_1 \twoheadrightarrow M_2 \twoheadrightarrow \dots \twoheadrightarrow M_n$ is a  \emph{flag matroid}. Flag matroids are related to flag varieties in the same natural way as matroids are related to Grassmannians (see for instance \cite{gelfand1987combinatorial} and \cite{cameron2017flag}).
	
	Tropical analogues of flag varieties were studied in \cite{brandt2021tropical, borzi2023linear, bossinger2017computing, haque2012tropical}. In particular, in \cite{brandt2021tropical} a notion of \emph{flag Dressian} was introduced, and in order to compare it with the tropical flag variety, it was proved that for ground sets of cardinality up to five, all flag matroids are realizable. This result, in the non-valuated case, can be now seen as an immediate consequence of the above theorem (in the form of Corollary \ref{cor: realizability of flags}), combined with the fact that matroids with ground set of small enough cardinality are realizable. A similar reasoning might be applied accordingly for the linear degenerate flag Dressian defined in \cite{borzi2023linear}. 
	
	Another natural application of our main theorem is about relative realizability in tropical geometry (see for instance \cite[Question C]{jell2022moduli} and \cite[Question 1.1]{geiger2020realizability}). 
	
	\begin{problem}[Relative realizability problem]\label{problem: relative realizability}
		Given a pair of tropical varieties $\mathcal{Y} \subseteq \mathcal{X}$ and an algebraic variety $X$ tropicalizing to $\mathcal{X}$, does there exist a subvariety $Y \subseteq X$ tropicalizing to $\mathcal{Y}$?
	\end{problem}
	
	There are many situations in which the above problem has a negative answer. One of the most famously known is when considering tropical lines contained in a tropical cubic surface. From the Cayley-Salmon theorem we know that a cubic surface contains just $27$ lines, while there are examples of tropical cubic surfaces that contain infinitely many tropical lines \cite{panizzut2022tropical}.
	
	
	Even in the case when $\mathcal{X}$ and $\mathcal{Y}$ in Problem \ref{problem: relative realizability} are linear, the answer might still be negative. An example of this was provided in \cite{lamboglia2022tropical} in the context of tropical Fano schemes, and further studied in \cite{jell2022moduli}. Our main theorem offers another perspective on this phenomena (see Section \ref{sec: linear case} for more details). Roughly speaking, in a similar way as a tropical linear space has an underlying matroid, an inclusion of tropical linear spaces has an underlying matroid quotient, and the corresponding relative realizability problem is related to the realizability problem of the matroid quotient. We also make some remarks about the nonlinear case in Section \ref{sec: nonlinear case}.
	
	This paper is organized as follows. In Section \ref{sec: preliminaries} we recall some basic notions about matroid quotients and review factorizations and majors, in Section \ref{sec: realizability of quotients}  we prove our main result (Theorem \ref{thm: characterization of realizability of quotients}). In Section \ref{sec: applications}  we discuss some applications to the relative realizability problem in tropical geometry, and finally in Section \ref{sec: questions} we list some questions.
	
	\paragraph{Acknowledgements} I would like to thank Diane Maclagan, Bernd Sturmfels and Alex Fink for useful discussions and email exchanges. A special thank to Alheydis Geiger for insights about the relative realizability problem for lines on a cubic surface and to Hamdi Dërvodeli for sharing his notes on the Veronese embedding. Further thanks to Hannah Markwig and Victoria Schleis for fruitful exchanges that partly inspired this work.
	
	\section{Preliminaries}\label{sec: preliminaries}
	
	We assume that the reader is familiar with the basics of matroid theory \cite{oxley2011}.
	
	\subsection{Morphisms, strong maps and quotients}
	
	Let $M_1$ and $M_2$ be matroids on the ground sets $E_1$ and $E_2$ respectively.
	
	\begin{definition}
		 A \emph{morphism} of matroids $f:M_1 \rightarrow M_2$ is a function of sets $f:{E_1} \rightarrow {E_2}$ such that for every flat $F$ of $M_2$, $f^{-1}(F)$ is a flat of $M_1$.
	\end{definition}

	For a set $E$ and an element $o$ not in $E$, we will denote $E \cup \{o\}$ by $E_o$. Let $o$ be an element not in $E_1 \cup E_2$, and let ${E_i}_o$ be the ground set of $M_i \oplus U_{0,1}$ for $i \in \{1,2\}$.

	\begin{definition}
		A \emph{strong map} of matroids $f:M_1 \rightarrow M_2$ is a morphism of matroids $f_o: M_1 \oplus U_{0,1} \rightarrow M_2 \oplus U_{0,1}$ such that $f_o(o) = o$.
	\end{definition}
	
	
	From the definition, it is easy to see that the composition of two strong maps is a strong map. Therefore, matroids together with strong maps form a category. This category was studied in \cite{heunen2018category}.
		
	There are two natural examples of strong maps of matroids:
	\begin{itemize}
		\item \textbf{Extensions}: if $M$ is a matroid on $E$, and $N$ is an extension of $M$ on $E \cup S$, then the inclusion map $E_o \subseteq (E \cup S)_o$ is a strong map from $M$ to $N$.
		
		\item \textbf{Contractions}: if $I \subseteq E$, then the map $f: E_o \rightarrow E_o \setminus I$ defined by $f(o) = o$ and for every $e \in E$
		\[ f(e) = \begin{cases}
			e & \text{if } e \notin I, \\
			o & \text{if } e \in I,
		\end{cases} \]
		is a strong map from $M$ to $M/I$.
	\end{itemize}
	
	\begin{definition}
		If $M_1$ and $M_2$ have the same ground set $E$, and the identity on $E$ is a morphism of matroids, then $M_2$ is a \emph{quotient} of $M_1$, and the identity map on $E_o$ is the \emph{quotient map} from $M_1$ to $M_2$.
	\end{definition}
	\noindent
	Quotients will be denoted by $M_1 \twoheadrightarrow M_2$ or by $f:M_1 \twoheadrightarrow M_2$ to emphasize the underlying quotient map.
	
	As explained in \cite[Section 8.1]{Kung1986bookchapter}, in order to study strong maps of matroids, often times one can essentially restrict to study matroids quotients.

	\subsection{Realizability}\label{sec: realizability}
	
	\begin{definition}
		A rank $r$ matroid $M$ on $E$ is \emph{realizable} (or \emph{representable}) over a field $K$ if there exists a $K$-vector space $V$ of dimension $r$ and a map $\varphi:E \rightarrow V$ such that $A \subseteq E$ is independent on $M$ if and only if the vectors $\varphi(A)$ are linearly independent.
	\end{definition}

	Let $M$ be a matroid realizable over $K$, and let $\varphi_i : E \rightarrow V_i$ for $i \in \{ 1,2 \}$ be two maps realizing $M$. We say that $\varphi_1$ and $\varphi_2$ are \emph{equivalent} if there exists an isomorphism $\psi:V_1 \rightarrow V_2$ such that $\varphi_2 = \psi \circ \varphi_1$. A \emph{realization} of $M$ over $K$ is an equivalence class of such maps.
	
	If $E$ has cardinality $n$, a map $\varphi:E \rightarrow V$ realizing $M$ is often regarded as an $r \times n$ matrix $A$ with coefficients in $K$ once a basis of $V$ is fixed. Thus, an equivalence class is obtained by multiplying $A$ on the left by all invertible $r \times r$ matrices. Such an equivalence class can be identified with the $r$-dimensional vector subspace of $K^n$ generated by the rows of $A$. This can be also thought as a point in the Grassmannian $\Gr(r,n)$, which can be given by its Pl\"{u}cker coordinates, via the Pl\"{u}cker embedding of $\Gr(r,n)$ into $\mathbb{P}^{\binom{n}{r}-1}$.
	
	\begin{definition}
		Let $M_i$ be a matroid on $E_i$ for $i \in \{ 1,2 \}$. A morphism of matroids $f:M_1 \rightarrow M_2$ is \emph{realizable} over a field $K$ if both $M_1$ and $M_2$ are realizable over $K$, and there exists a map $\varphi_i:E_i \rightarrow V_i$ realizing $M_i$ for each $i \in \{ 1,2 \}$ and a linear map $g: V_1 \rightarrow V_2$ such that the following diagram is commutative:
		\[ \begin{tikzcd}
		E_1 \arrow[d, "f"] \arrow[r, "\varphi_1"] & V_1 \arrow[d, "g"] \\
		E_2 \arrow[r, "\varphi_2"] & V_2
		\end{tikzcd} \]
		A strong map (or quotient) of matroids is \emph{realizable} over a field $K$ if so is as a morphism of matroids.
	\end{definition}

	Assume that the morphism of matroids $f:M_1 \rightarrow M_2$ is realized over $K$ by the following two commutative diagrams:
	\[
	\begin{tikzcd}
	E_1 \arrow[d, "f"] \arrow[r, "\varphi_1"] & V_1 \arrow[d, "g"] \\
	E_2 \arrow[r, "\varphi_2"] & V_2
	\end{tikzcd}
	\qquad
	\begin{tikzcd}
	E_1 \arrow[d, "f"] \arrow[r, "\varphi_1'"] & V_1' \arrow[d, "g'"] \\
	E_2 \arrow[r, "\varphi_2'"] & V_2'
	\end{tikzcd}
	\]
	The above commutative diagrams realizing $f$ are \emph{equivalent} if there exist two isomorphisms $\psi_i : V_i \rightarrow V_i'$ for $i \in \{ 1,2 \}$ such that the following diagram is commutative:
	\[
	\begin{tikzcd}[row sep=scriptsize, column sep=scriptsize]
		& E_1 \arrow[dl, "\varphi_1"'] \arrow[dd, "f", near end] \arrow[dr, "\varphi_1'"] & \\
		V_1 \arrow[rr, crossing over, "\psi_1", "\sim"', near start] \arrow[dd, "g"'] & & V_1' \arrow[dd, "g'"] \\
		& E_2 \arrow[dl, "\varphi_2"'] \arrow[dr, "\varphi_2'"] & \\
		V_2 \arrow[rr, "\psi_2", "\sim"', near start] & & V_2' \\
	\end{tikzcd}
	\]
	A \emph{realization} of $f:M_1 \rightarrow M_2$ over $K$ is an equivalence class of such commutative diagrams.
	
	For example, if $E_1$ and $E_2$ are the same set with cardinality $n$, and the matroid $M_i$ has rank $r_i$ for $i \in \{ 1,2 \}$, a realization of the quotient $M_1 \twoheadrightarrow M_2$ can be thought to be a realization of $M_i$ given by an $r_i$-dimensional $K$-vector space $U_i \subseteq K^n$ for $i \in \{ 1,2 \}$ (in the sense we have described above) such that $U_2 \subseteq U_1$. Therefore, a realization of $M_1 \twoheadrightarrow M_2$ can be thought of as a point in the flag variety $\Fl(r_1,r_2;n)$, which can be given by its Pl\"{u}cker coordinates once we embed the flag variety in a product of Grassmannians $\Fl(r_1,r_2;n) \subseteq \Gr(r_1,n) \times \Gr(r_2,n) \subseteq \mathbb{P}^{\binom{n}{r_1}-1} \times \mathbb{P}^{\binom{n}{r_2}-1}$.
	
	\begin{definition}
		A \emph{diagram of matroids} consists of a set of matroids $\{M_\lambda\}_{\lambda \in \Lambda}$ and a set of some morphisms between them $\{ f: M_\lambda \rightarrow M_{\lambda'} \}$.
	\end{definition}

	The previous definition can be recast in terms of category theory. Denote by $\mathbf{Mat}$ the category of matroids with their morphisms. A diagram of matroids can be thought as a functor $D: \mathbf{I} \rightarrow \mathbf{Mat}$, where $\mathbf{I}$ is some index category.
	
	\begin{definition}\label{def: realization diagram matroid}
		A diagram of matroids $(\{M_\lambda\}_{\lambda \in \Lambda}, \{ f: M_\lambda \rightarrow M_{\lambda'} \})$ is \emph{realizable} over a field $K$ if for every $\lambda \in \Lambda$ there exists a map $\varphi_\lambda: E_\lambda \rightarrow V_\lambda$ that realizes $M_\lambda$, and for every morphism $f: M_\lambda \rightarrow M_{\lambda'}$ in the diagram, there exists a linear map $g: V_\lambda \rightarrow V_{\lambda'}$ such that the following diagram commutes:
		\[ \begin{tikzcd}
		E_\lambda \arrow[d, "f"] \arrow[r, "\varphi_\lambda"] & V_\lambda \arrow[d, "g"] \\
		E_{\lambda'} \arrow[r, "\varphi_{\lambda'}"'] & V_{\lambda'}
		\end{tikzcd} \]
		Two such family of maps $(\{\varphi_{i,\lambda}: E_\lambda \rightarrow V_{i,\lambda} \}_{\lambda \in \Lambda},\{g_i:V_{i,\lambda} \rightarrow V_{i,\lambda'} \})$ for $i \in \{ 1,2 \}$ are \emph{equivalent} if for every $\lambda \in \Lambda$ there exists an isomorphism $\psi_{\lambda} : V_{1,\lambda} \rightarrow V_{2,\lambda}$ such that, for every morphism $f:M_\lambda \rightarrow M_{\lambda'}$ in the diagram of matroids, the following diagram commutes:
		\[
		\begin{tikzcd}[row sep=scriptsize, column sep=scriptsize]
			& E_\lambda \arrow[dl, "\varphi_{1,\lambda}"'] \arrow[dd, "f", near end] \arrow[dr, "\varphi_{2,\lambda}"] & \\
			V_{1,\lambda} \arrow[rr, crossing over, "\psi_\lambda", "\sim"', near start] \arrow[dd, "g_1"'] & & V_{2,\lambda} \arrow[dd, "g_2"] \\
			& E_{\lambda'} \arrow[dl, "\varphi_{1,\lambda'}"', near start] \arrow[dr, "\varphi_{2,\lambda'}", near start] & \\
			V_{1,\lambda'} \arrow[rr, "\psi_{\lambda'}", "\sim"', near start] & & V_{2,\lambda'} \\
		\end{tikzcd}
		\]
		A \emph{realization} of a diagram of matroids is an equivalence class of families of maps $(\varphi_{i,\lambda},g_i)$ as above.
	\end{definition}
	
	The above definitions can be more compactly recast in terms of category theory. We will follow a similar point of view taken in \cite[Remark 2.1]{eur2020logarithmic}.
	
	Let $\mathbf{Mat}(K)$ be the category in which an object is a map $\varphi: E \rightarrow V$ realizing a matroid $M(\varphi)$ on the ground set $E$ over a field $K$, and a morphism from $\varphi_1$ to $\varphi_2$ is a commutative diagram of the form
	\[ \begin{tikzcd}
	E_1 \arrow[d, "f"] \arrow[r, "\varphi_1"] & V_1 \arrow[d, "g"] \\
	E_2 \arrow[r, "\varphi_2"'] & V_2
	\end{tikzcd} \]
	realizing a morphism $f:M(\varphi_1) \rightarrow M(\varphi_2)$. From the definition, we have a functor $\mathcal{F}:\mathbf{Mat}(K) \rightarrow \mathbf{Mat}$ defined by $\varphi \mapsto M(\varphi)$.
	
	Now let $D: \mathbf{I} \rightarrow \mathbf{Mat}$ be a diagram of matroids. Without loss of generality, we can assume that $D$ is injective on objects. Thus the image of $D$ is a subcategory $\im(D)$ of $\mathbf{Mat}$. Then, the diagram $D$ is realized over a field $K$ by a functor $F: \im(D) \rightarrow \mathbf{Mat}(K)$ such that $\mathcal{F} \circ F$ is the identity on $\im(D)$. Two such functors realizing $D$ are equivalent if they are naturally isomorphic, and a realization of $D$ is an isomorphism class of such naturally isomorphic functors.
	
	\begin{question}
		Let $D: \mathbf{I} \rightarrow \mathbf{Mat}$ be a diagram of matroids. Does the notion of realizability of Definition \ref{def: realization diagram matroid} give rise to a stratification of the quiver Grassmannian corresponding to $D$ (defined as in \cite{schofield1992general})? 
	\end{question}
	
	\subsection{Modular cuts and weak maps}
	
	Following \cite[Section 7.3]{Kung1986bookchapter}, we now recall the notions of one element extensions of a matroid and modular cuts.
	
	An \emph{extension} of a matroid $M$ on $E$ is a matroid $N$ on $E \cup S$ such that $N \setminus S = M$. If $S = \{ e \}$ then $N$ is said a \emph{one element extension} of $M$, denoted $M+e$. In order to define a one element extension $M+e$ of $M$, we need just to specify, for each flat $F$ of $M$, whether or not $F$ and $F \cup \{ e \}$ are flats of $N$. Thus, we can partition the family $\mathscr{F}(M)$ of flats of $M$ into three families:
	\begin{align*}
	\mathscr{F}_1 &= \{ F \in \mathscr{F}(M): F \in \mathscr{F}(M+e), F \cup \{ e \} \in \mathscr{F}(M+e) \}, \\
	\mathscr{F}_2 &= \{ F \in \mathscr{F}(M): F \in \mathscr{F}(M+e), F \cup \{ e \} \notin \mathscr{F}(M+e) \}, \\
	\mathscr{F}_3 &= \{ F \in \mathscr{F}(M): F \notin \mathscr{F}(M+e), F \cup \{ e \} \in \mathscr{F}(M+e) \}.
	\end{align*}
	
	Now $\mathscr{F}_1$ is downward closed, i.e. any flat contained in a flat in $\mathscr{F}_1$ is in $\mathscr{F}_1$. On the other hand, $\mathscr{F}_3$ is upward closed, i.e. any flat that contains a flat in $\mathscr{F}_3$ is in $\mathscr{F}_3$. Finally, a flat is in $\mathscr{F}_2$ if and only if it is not in $\mathscr{F}_3$ and it is covered by a flat in $\mathscr{F}_3$. In light of the previous statements, this partition is uniquely determined by $\mathscr{F}_3$. Families of flats of type $\mathscr{F}_3$ are called \emph{modular cuts}, and they can be characterized as follows. A family $\mathcal{M}$ of flats of $M$ is a modular cut of $M$ if and only if
	\begin{itemize}
		\item $\mathcal{M}$ is upward closed and nonempty,
		\item if $A,B \in \mathcal{M}$ are such that $\rk(A) + \rk(B) = \rk(A \cap B) + \rk(A \cup B)$, then $A \cap B \in \mathcal{M}$.
	\end{itemize}
	
	It is now clear that, the one element extensions of $M$ are one to one with modular cuts of $M$. We will denote by $M+_{\mathcal{M}}e$ the one element extension of $M$ corresponding to the modular cut $\mathcal{M}$. 
	
	\begin{definition}
		Let $M_i$ be a matroid on $E_i$ for $i \in \{ 1,2 \}$. A \emph{weak map} is a function of sets $f: E_1 \rightarrow E_2$ such that if $I$ is an independent set of $M_2$, then $f^{-1}(I)$ is an independent set of $M_1$.
	\end{definition}

	Weak maps provide the following partial order on matroids on a common ground set $E$: if the identity on $E$ is a weak map from $M_1$ to $M_2$ then $M_1 \leq M_2$.
	
	\begin{lemma}[{\cite[Section 2.6]{kennedy1975majors}}]\label{lem: modular cuts and weak order}
		Let $M$ be a matroid on $E$, and let $\mathcal{M}_1$ and $\mathcal{M}_2$  be two modular cuts of $M$. The following are equivalent:
		\begin{enumerate}
			\item $\mathcal{M}_1 \subseteq \mathcal{M}_2$,
			\item $M +_{\mathcal{M}_1} e \leq M +_{\mathcal{M}_2} e$,
			\item $(M +_{\mathcal{M}_1} e) / e \leq (M +_{\mathcal{M}_2} e) / e$.
		\end{enumerate}
	\end{lemma}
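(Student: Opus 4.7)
The plan is to establish the equivalence $(1) \Leftrightarrow (2) \Leftrightarrow (3)$ by a direct computation with the rank functions of $M +_\mathcal{M} e$ and $(M +_\mathcal{M} e)/e$. First, I would record the rank formula for the one-element extension by a modular cut: for any subset $A \subseteq E$ one has $\rk_{M +_\mathcal{M} e}(A) = \rk_M(A)$, and
\[
\rk_{M +_\mathcal{M} e}(A \cup \{e\}) = \rk_M(A) + \chi_\mathcal{M}(A),
\]
where $\chi_\mathcal{M}(A) = 0$ if $\mathrm{cl}_M(A) \in \mathcal{M}$ and $\chi_\mathcal{M}(A) = 1$ otherwise; equivalently, $e$ lies in the closure of $A$ in $M +_\mathcal{M} e$ precisely when $\mathrm{cl}_M(A) \in \mathcal{M}$. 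The contraction rank then satisfies $\rk_{(M +_\mathcal{M} e)/e}(A) = \rk_M(A) + \chi_\mathcal{M}(A) - \chi_\mathcal{M}(\emptyset)$, with $\chi_\mathcal{M}(\emptyset)$ recording whether $e$ is a loop.

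For $(1) \Rightarrow (2)$ and $(1) \Rightarrow (3)$, I would observe that the containment $\mathcal{M}_1 \subseteq \mathcal{M}_2$ is equivalent to the pointwise inequality $\chi_{\mathcal{M}_1}(A) \geq \chi_{\mathcal{M}_2}(A)$, since a flat missing the larger cut automatically misses the smaller one. Substituting into the rank formulas yields pointwise comparisons of $\rk_{M +_{\mathcal{M}_1} e}$ against $\rk_{M +_{\mathcal{M}_2} e}$ and of $\rk_{(M +_{\mathcal{M}_1} e)/e}$ against $\rk_{(M +_{\mathcal{M}_2} e)/e}$, which translate into the weak-order statements (2) and (3) as soon as one checks that the $\chi_\mathcal{M}(\emptyset)$ correction in the contraction formula shifts consistently, via the fact that $\chi_{\mathcal{M}_1}(\emptyset) \geq \chi_{\mathcal{M}_2}(\emptyset)$.

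For the reverse implications $(2) \Rightarrow (1)$ and $(3) \Rightarrow (1)$, I would argue by contraposition. Suppose there exists a flat $F \in \mathcal{M}_1 \setminus \mathcal{M}_2$; then $\chi_{\mathcal{M}_1}(F) = 0$ while $\chi_{\mathcal{M}_2}(F) = 1$. Evaluating the extension rank formula at $A = F$ gives $\rk_{M +_{\mathcal{M}_1} e}(F \cup \{e\}) = \rk_M(F)$ whereas $\rk_{M +_{\mathcal{M}_2} e}(F \cup \{e\}) = \rk_M(F) + 1$, contradicting the weak-order inequality stated in (2); the same evaluation in the contraction rank formula produces the corresponding contradiction to (3). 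In this way (1), (2), (3) collapse to the single inequality of indicator functions $\chi_{\mathcal{M}_1} \geq \chi_{\mathcal{M}_2}$.

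The main obstacle is the careful bookkeeping of the $\chi_\mathcal{M}(\emptyset)$ correction in the contraction rank formula, which records whether $e$ is a loop in $M +_\mathcal{M} e$. By upward closure, $\mathrm{cl}_M(\emptyset) \in \mathcal{M}$ already forces $\mathcal{M} = \mathscr{F}(M)$, so the hypothesis $\mathcal{M}_1 \subseteq \mathcal{M}_2$ aligns the loop/non-loop status of $e$ between the two extensions in a compatible way; keeping track of this alignment is what makes the implication $(1) \Rightarrow (3)$ go through in the boundary cases, and also what legitimises the test-set argument for $(3) \Rightarrow (1)$.
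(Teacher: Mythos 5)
The paper does not prove this lemma: it cites it as a known result of Kennedy, so there is no in-paper proof to compare against. Your approach via the rank formulas for $M+_{\mathcal{M}}e$ and $(M+_{\mathcal{M}}e)/e$ is the natural one and correctly reduces everything to the pointwise comparison of the indicator functions $\chi_{\mathcal{M}_1}$ and $\chi_{\mathcal{M}_2}$. With the paper's convention for the weak order (so that $M_1 \leq M_2$ amounts to $\rk_{M_1} \geq \rk_{M_2}$ pointwise), the equivalence $(1) \Leftrightarrow (2)$ and the implication $(3) \Rightarrow (1)$ both come out of your computation cleanly.

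The gap is in $(1) \Rightarrow (3)$. You concede that the $\chi_{\mathcal{M}}(\emptyset)$ correction must be tracked, but your claim that $\mathcal{M}_1 \subseteq \mathcal{M}_2$ "aligns the loop/non-loop status of $e$ \dots in a compatible way" is not true. The dangerous configuration is $\mathcal{M}_1 \subsetneq \mathcal{M}_2 = \mathscr{F}(M)$: then $e$ is a loop in $M+_{\mathcal{M}_2}e$ but not in $M+_{\mathcal{M}_1}e$, so $\chi_{\mathcal{M}_1}(\emptyset)=1 > 0 = \chi_{\mathcal{M}_2}(\emptyset)$, and the contraction rank difference $\bigl[\chi_{\mathcal{M}_1}(A)-\chi_{\mathcal{M}_2}(A)\bigr]-\bigl[\chi_{\mathcal{M}_1}(\emptyset)-\chi_{\mathcal{M}_2}(\emptyset)\bigr]$ is strictly negative whenever $\chi_{\mathcal{M}_1}(A)=0$. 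Concretely take $M=U_{2,3}$, $\mathcal{M}_1=\{E\}$, $\mathcal{M}_2=\mathscr{F}(M)$: then $(M+_{\mathcal{M}_1}e)/e = U_{1,3}$ while $(M+_{\mathcal{M}_2}e)/e = U_{2,3}$, and $U_{1,3}$ is \emph{not} below $U_{2,3}$ in the stated order (in fact the reverse inequality holds). So $(1)\Rightarrow(3)$ genuinely fails here, and no amount of "bookkeeping" rescues it. The lemma must be read with the additional hypothesis that neither $\mathcal{M}_i$ contains $\mathrm{cl}_M(\emptyset)$ (equivalently, $e$ is a loop in neither extension), which is automatic in the intended context of elementary quotients since a loop extension followed by contraction returns $M$ itself. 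Your proof should make this exclusion explicit rather than assert that the boundary case works out; once that hypothesis is in force, $\chi_{\mathcal{M}_1}(\emptyset)=\chi_{\mathcal{M}_2}(\emptyset)=1$, the correction terms cancel, and your argument is complete.
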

	
	\subsection{Factorizations and majors}
	
	
	In this section we recall the basics of factorizations and majors of matroid quotients. Our main reference here is \cite{kennedy1975majors}.
	
	The \emph{nullity} of a morphism of matroids $f: M_1 \rightarrow M_2$ (or strong map, or quotient) is the integer $n(f) = \rk(M_1) - \rk(M_2)$. Quotients $M_1 \twoheadrightarrow M_2$ of nullity one are called \emph{elementary}. In this case, $M_2$ is said to be an \emph{elementary quotient} of $M_1$, and $M_2$ can be obtained from $M_1$ by a one element extension followed by a contraction of the element added. Since, as we have seen in the previous section, one element extensions correspond to modular cuts, so do elementary quotients.
	
	In the remainder of this section, if not stated otherwise, $f: M_1 \twoheadrightarrow M_2$ is a quotient of nullity $k$ of matroids sharing a common ground set $E$ of cardinality $n$.
	
	\begin{definition}
		A \emph{factorization} of a quotient $f:M_1 \twoheadrightarrow M_2$ is a sequence of matroids $N =(N_0,N_1,\dots,N_k)$ such that we have a chain of elementary quotients as follows:
		\begin{equation}\label{eq: factorization}
			M_1 = N_0 \twoheadrightarrow N_1 \twoheadrightarrow \dots \twoheadrightarrow N_{k-1} \twoheadrightarrow N_k = M_2.
		\end{equation}
		The \emph{length} (or \emph{nullity}) of a factorization is equal to the nullity of its quotient.
	\end{definition}
	
	The nullity of a subset $A \subseteq E$ is $n(A) = \rk_{M_1}(A) - \rk_{M_2}(A)$. From \cite[Proposition 8.1.6]{Kung1986bookchapter} the nullity is an increasing set function. We will frequently use this fact in the rest of the paper.
	
	\begin{proposition definition}[Higgs lift {\cite[Lemma 8.1.6]{Kung1986bookchapter}}]\label{prop-def: Higgs lifts}
		Given a quotient $f: M_1 \twoheadrightarrow M_2$, the flats of $M_2$ together with the flats of $M_1$ with nullity strictly less than $i$	are the flats of a matroid $L^i(f)$, called the \emph{$i$-th Higgs lift} of $f$.
	\end{proposition definition}
	
	The Higgs lifts of $f$ form a chain of elementary quotients:
	\[ M_1 = L^k(f) \twoheadrightarrow L^{k-1}(f) \twoheadrightarrow \dots \twoheadrightarrow L^1(f) \twoheadrightarrow L^0(f) = M_2. \]
	Therefore, the sequence $(L^i(f))_i$ is a factorization of $f$, called the \emph{Higgs factorization}. Hence, every quotient has at least one factorization, but in general factorizations are not unique (see \cite{kennedy1975majors}).

	Factorizations are particular instances of diagrams of matroids. Thus we can consider realizability and realizations of factorizations in the sense of Definition \ref{def: realization diagram matroid}.
	
	\begin{definition}
		A \emph{major} of a quotient $f:M_1 \twoheadrightarrow M_2$ of nullity $k$ is a matroid $H$ on the ground set $E \cup K$, where $K$ is of cardinality $k$ and disjoint from $E$, such that $M_1 = H \setminus K$ and $M_2 = H / K$.
	\end{definition}
	
	In other words, if $H$ is a major of $f:M_1 \twoheadrightarrow M_2$, then $f$ can be factored into an extension followed by a contraction: $M_1 \rightarrow H \rightarrow M_2$.
	
	\begin{theorem}[Higgs \cite{higgs1968strong}]\label{thm: Higgs factorization theorem}
		Every matroid quotient admits a major.
	\end{theorem}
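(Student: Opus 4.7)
The plan is induction on the nullity $k = n(f)$. When $k = 0$ we have $M_1 = M_2$ and we take $H = M_1$ with $K = \emptyset$. When $k = 1$ (the elementary case), the quotient is determined by a modular cut $\mathcal{M}$ of $M_1$, and by the discussion of one-element extensions in Section~\ref{sec: preliminaries}, the matroid $H := M_1 +_{\mathcal{M}} e$ satisfies $H \setminus \{e\} = M_1$ and $H / \{e\} = M_2$, so it is a major.

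For the inductive step with $k \geq 2$, I would use the Higgs factorization to write $f$ as the composition of an elementary quotient $M_1 \twoheadrightarrow L^{k-1}(f)$ and a quotient $g : L^{k-1}(f) \twoheadrightarrow M_2$ of nullity $k - 1$. The elementary case yields a one-element extension $N = M_1 + e$ on $E \cup \{e\}$ with $N \setminus e = M_1$ and $N / e = L^{k-1}(f)$, while the inductive hypothesis yields a major $H'$ of $g$ on $E \cup K'$ with $|K'| = k - 1$ and $e \notin K'$. It then suffices to construct a matroid $H$ on $E \cup \{e\} \cup K'$ with $H \setminus K' = N$ and $H / e = H'$, since then $H \setminus (\{e\} \cup K') = N \setminus e = M_1$ and $H / (\{e\} \cup K') = H' / K' = M_2$.

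The key technical tool I would prove is the following \emph{lifting lemma}: for any matroid $P$ with a non-loop element $e$, every one-element extension $(P/e) + y$ of $P/e$ is of the form $(P + y)/e$ for some one-element extension of $P$ by $y$. This is proved by lifting the associated modular cut $\mathcal{M}'$ of $P / e$ to the family $\widetilde{\mathcal{M}} := \{F \in \mathscr{F}(P) : \operatorname{cl}_P(F \cup \{e\}) \setminus \{e\} \in \mathcal{M}'\}$, using the bijection between flats of $P / e$ and flats of $P$ containing $e$, and verifying that $\widetilde{\mathcal{M}}$ is a modular cut of $P$ whose corresponding extension contracts correctly. Decomposing $H'$ into a chain of one-element extensions $L^{k-1}(f) = P_0 \subset P_1 \subset \dots \subset P_{k-1} = H'$ along any ordering of $K' = \{y_1, \dots, y_{k-1}\}$, repeated application of the lifting lemma starting from $N$ produces the required $H$.

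The main obstacle is the lifting lemma, particularly showing that $\widetilde{\mathcal{M}}$ is closed under intersections of modular pairs (upward closure and nonemptiness are immediate). This amounts to translating modularity in $P$ to modularity in $P / e$ via the rank relation $\rk_{P/e}(A) = \rk_P(A \cup \{e\}) - 1$, and is a somewhat delicate but standard computation exploiting submodularity of the rank function.
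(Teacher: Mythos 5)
Your high-level plan---induction on nullity using the Higgs factorization, repeatedly lifting one-element extensions through a contraction---is the same strategy the paper implicitly follows via Kennedy's construction $\mathfrak{M}$ in Section~2.4, and the key tool you are trying to prove is precisely Lemma~\ref{lem: standard extension contraction lemma}. However, your proposed lift of the modular cut is incorrect: the family
\[ \widetilde{\mathcal{M}} := \{F \in \mathscr{F}(P) : \operatorname{cl}_P(F \cup \{e\}) \setminus \{e\} \in \mathcal{M}'\} \]
is generally not a modular cut. Take $P = U_{2,3}$ on $\{a,b,e\}$ and let $\mathcal{M}'$ be the principal modular cut $\{\{a,b\}\}$ of $P/e = U_{1,2}$. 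Then $\widetilde{\mathcal{M}} = \{\{a\},\{b\},\{a,b,e\}\}$, but $\{a\}$ and $\{b\}$ form a modular pair of flats in $U_{2,3}$ (since $1 + 1 = 0 + 2$) whose intersection $\emptyset$ is absent from $\widetilde{\mathcal{M}}$; the modular cut axiom fails. Morally, $\widetilde{\mathcal{M}}$ is the ``preimage'' of $\mathcal{M}'$ under closure in $P/e$ and therefore over-specifies the extension: demanding that $y$ lie on every flat of $P$ whose $P/e$-closure is in $\mathcal{M}'$ forces dependencies ($\emptyset \in \widetilde{\mathcal{M}}$, i.e.\ $y$ a loop) that you did not intend.

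The correct lift, stated in Lemma~\ref{lem: standard extension contraction lemma} and due to Kennedy, is the \emph{smaller} family $\mathcal{M}' \cup e = \{F \cup \{e\} : F \in \mathcal{M}'\}$, consisting only of flats of $P$ that contain $e$. In the example above this is $\{\{a,b,e\}\}$, a modular cut, giving $P +_{\mathcal{M}' \cup e} y = U_{2,4}$ and $(U_{2,4})/e = U_{1,3} = (P/e) +_{\mathcal{M}'} y$, as required. The point is that to control the position of $y$ after contracting $e$, one only needs to prescribe its incidences with flats containing $e$; the other incidences are induced automatically by contraction. If you replace your lifting lemma with Lemma~\ref{lem: standard extension contraction lemma} (or simply invoke it), the inductive construction goes through and reproduces the argument behind the map $\mathfrak{M}$ applied to the Higgs factorization, which is exactly how the paper recovers the Higgs major. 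The paper also mentions a non-inductive description: the Higgs major is the $k$-th Higgs lift of $M_1 \oplus U_{k,k} \twoheadrightarrow M_2 \oplus U_{0,k}$, which sidesteps the lifting argument entirely.
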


	The major constructed by Higgs in the proof of the above theorem is called \emph{Higgs major}. The Higgs major of $f:M_1 \twoheadrightarrow M_2$ is constructed by taking the $k$-th Higgs lift of the quotient $M_1 \oplus U_{k,k} \twoheadrightarrow M_2 \oplus U_{0,k}$. See \cite[Theorem 8.2.7]{Kung1986bookchapter} for more details.
	
	In general, majors of a quotient are not unique. See \cite[Example 3.14]{kennedy1975majors} for an example of a quotient  admitting two distinct majors.
	
	\begin{lemma}[{ \cite[Lemma 3.3-3.4]{kennedy1975majors}}]\label{lem: standard extension contraction lemma}
		Let $M$ be a matroid on $E$, $e \in E$ and $f$ an element not in $E$.
		\begin{enumerate}
			\item If $\mathcal{M}$ is a modular cut of $M / e$, then $\mathcal{M} \cup e = \{ F \cup \{e\} : F \in \mathcal{M} \}$ is a modular cut of $M$.
			\item $(M/e) +_{\mathcal{M}} f = (M +_{\mathcal{M} \cup e} f) /e$.
		\end{enumerate}
	\end{lemma}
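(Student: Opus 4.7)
The plan is to leverage the standard bijection $F \leftrightarrow F \cup \{e\}$ between flats of $M/e$ and flats of $M$ that contain $e$ (assuming $e$ is not a loop; the loop case is trivial since then $M/e = M \setminus e$ and flats of $M$ containing $e$ are exactly all flats of $M$). Under this bijection, ranks differ by the constant $\rk_M(\{e\}) = 1$, so modular pairs of flats of $M$ that both contain $e$ correspond to modular pairs in $M/e$. By definition, $\mathcal{M} \cup e$ is the image of $\mathcal{M}$ under this bijection, and in particular every element of $\mathcal{M} \cup e$ is a flat of $M$.

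For part (1), I would verify the two modular cut axioms for $\mathcal{M} \cup e$ in $M$. Upward closure: a flat $G$ of $M$ containing some $F \cup \{e\} \in \mathcal{M} \cup e$ automatically contains $e$, so it corresponds to a flat $G'$ of $M/e$ containing $F$; upward closure of $\mathcal{M}$ gives $G' \in \mathcal{M}$, hence $G = G' \cup \{e\} \in \mathcal{M} \cup e$. Modular intersection: if $A,B \in \mathcal{M} \cup e$ form a modular pair in $M$, write $A = A' \cup \{e\}$, $B = B' \cup \{e\}$. Since both contain $e$, we have $A \cap B = (A' \cap B') \cup \{e\}$ and $A \cup B = (A' \cup B') \cup \{e\}$, and the constant rank shift shows that $(A',B')$ is a modular pair in $M/e$. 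Modularity of $\mathcal{M}$ then yields $A' \cap B' \in \mathcal{M}$, whence $A \cap B \in \mathcal{M} \cup e$.

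For part (2), both sides are matroids on $(E \setminus \{e\}) \cup \{f\}$, and I would identify them by matching their flats. Partition $\mathscr{F}(M/e) = \mathscr{F}_1 \sqcup \mathscr{F}_2 \sqcup \mathscr{F}_3$ according to the modular cut $\mathcal{M}$, and $\mathscr{F}(M) = \mathscr{F}_1' \sqcup \mathscr{F}_2' \sqcup \mathscr{F}_3'$ according to $\mathcal{M} \cup e$, as recalled in Section~\ref{sec: preliminaries}. The key intermediate claim is that the flat bijection restricts to a bijection $\mathscr{F}_i \cong \{H \in \mathscr{F}_i' : e \in H\}$ for each $i$: case $i=3$ holds by construction of $\mathcal{M} \cup e$; case $i=2$ follows because any flat of $M$ covering $F \cup \{e\}$ must itself contain $e$, so it corresponds under the bijection to a cover of $F$ in $M/e$, and conversely; case $i=1$ is the complement. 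Writing out the flats of $(M/e) +_{\mathcal{M}} f$ via the $\mathscr{F}_i$ and the flats of $(M +_{\mathcal{M} \cup e} f)/e$ (which, by the flat/contraction correspondence, are precisely the flats of $M +_{\mathcal{M} \cup e} f$ containing $e$ with $e$ stripped off) via the $\mathscr{F}_i'$, the two lists then match term by term. The main technical hurdle is exactly this bookkeeping of flat classes across the bijection, especially the $\mathscr{F}_2$ case, but once that intermediate correspondence is established both parts fall out directly.
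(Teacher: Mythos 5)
The paper does not prove this lemma; it is imported verbatim from Kennedy (cited as Lemma 3.3--3.4 of that work), so there is no in-paper proof to compare against. On its own merits, your argument is correct and follows the standard route: exploit the rank-shifting bijection $F \leftrightarrow F \cup \{e\}$ between flats of $M/e$ and flats of $M$ containing $e$.

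For part~(1), the upward-closure and modular-intersection checks are both right. The crucial observation in the modular-intersection step --- that since $A,B$ both contain $e$, the sets $A\cap B$ and $A\cup B$ also contain $e$, and the identity $\rk_M(S) = \rk_{M/e}(S\setminus e) + \rk_M(\{e\})$ for $e\in S$ cancels in the modular-pair equation --- is exactly what makes the transfer work; you invoked it as a ``constant rank shift,'' which is fine, though it would not hurt to display the rank identity once. You also correctly dispose of the degenerate loop case up front.

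For part~(2), identifying both matroids by their flats via the $\mathscr{F}_1,\mathscr{F}_2,\mathscr{F}_3$ decomposition is the right strategy. The one place that needs the most care is $\mathscr{F}_2 \leftrightarrow \{H\in\mathscr{F}_2' : e\in H\}$, and your justification is adequate: any flat of $M$ covering $F\cup\{e\}$ must contain $e$, and the bijection preserves the cover relation (no flat strictly between $F\cup\{e\}$ and $G$ iff no flat strictly between $F$ and $G\setminus e$), so ``covered by an element of $\mathscr{F}_3$'' transfers. Combined with the easy $\mathscr{F}_3$ case and the complement for $\mathscr{F}_1$, stripping $e$ from the flats of $M +_{\mathcal{M}\cup e} f$ that contain $e$ reproduces exactly the list of flats of $(M/e) +_{\mathcal{M}} f$. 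The proof is sound and is, I expect, essentially Kennedy's original argument.
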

	
	We now introduce some more notation: arrows of type $\overset{e}{\underset{\mathcal{M}}{\longrightarrow}}$ denote a strong map arising from a one element extension by an element $e$ with respect to the modular cut $\mathcal{M}$; arrows of type $\overset{e}{\longrightarrow}$ denote the strong map arising from the quotient by the element $e$. With this notation, the above lemma can be rephrased simply by saying that the following diagram of matroids and strong maps, called the \emph{standard extension-contraction diamond} in \cite{kennedy1975majors}, commutes:
	
	\begin{equation}\label{eq: standard extensions contraction diamond}
		\begin{tikzcd}
		& M_2 \arrow[dr, "e"] & \\
		M_1 \arrow[ur, "f", "\mathcal{M} \cup e"'] \arrow[dr, "e"] & & M_3 \\
		& M_4 \arrow[ur, "f", "\mathcal{M}"']
		\end{tikzcd}
	\end{equation}
	
	Now let $N = (N_0, \dots, N_k)$ be a factorization of a quotient $f:M_1 \twoheadrightarrow M_2$ of nullity $k$.  Every elementary quotient $N_{i-1} \twoheadrightarrow N_{i}$ can be factored into an extension by one element $e_i$ with modular cut $\mathcal{M}_i$ followed by a contraction by $e_i$. In particular, this means that we can go from $M_1 = N_0$ to $M_2 = N_k$ by extending and then contracting $e_i$ for $i$ from $1$ to $k$. By repeatedly applying Lemma \ref{lem: standard extension contraction lemma}, we can first extend $M_1$ by $e_1,e_2,e_3,\dots,e_k$ with respect to the modular cuts $\mathcal{M}_1,\mathcal{M}_2 \cup e_1, \mathcal{M}_3 \cup e_1 \cup e_2, \dots, \mathcal{M}_k \cup e_1 \cup \dots \cup e_{k-1}$ respectively, and then contract by $e_1,\dots,e_k$ all at once. Thus, the matroid we obtain after all such extensions is, by definition, a major of $f$. This major will be denoted by $\mathfrak{M}(N)$, and clearly it depends on the factorization $N$ we started with.
	
	The above procedure can be better displayed by constructing a triangular commutative diagram associated with the factorization. Starting with the extension and contraction maps along the base of the triangle, each successive level is constructed by applying Lemma \ref{lem: standard extension contraction lemma}. We illustrate this procedure with the following triangular commutative diagram of a factorization of a quotient of nullity $3$.
	
	\begin{equation}\label{eq: triangular diagram}
		\begin{tikzcd}
		&&& N_{3,0} \arrow[dr, "e_1"] &&& \\
		&& N_{2,0} \arrow[ur, "e_3", "\mathcal{M}_3 \cup e_1 \cup e_2"'] \arrow[dr, "e_1"] && N_{2,1} \arrow[dr, "e_2"] && \\
		& N_{1,0} \arrow[ur, "e_2", "\mathcal{M}_2 \cup e_1"'] \arrow[dr, "e_1"] && N_{1,1} \arrow[ur, "e_3", "\mathcal{M}_3 \cup e_2"'] \arrow[dr, "e_2"] && N_{1,2} \arrow[dr, "e_3"] & \\
		N_{0,0} \arrow[ur, "e_1", "\mathcal{M}_1"'] \arrow[rr, two heads] && N_{0,1} \arrow[ur, "e_2", "\mathcal{M}_2"'] \arrow[rr, two heads] && N_{0,2} \arrow[ur, "e_3", "\mathcal{M}_3"'] \arrow[rr, two heads] && N_{0,3}
		\end{tikzcd}
	\end{equation}
	
	We now wish to construct a factorization from a major. Let $H$ be a major of $f:M_1 \twoheadrightarrow M_2$ on the ground set $E \cup K$, where $K = \{e_1,\dots,e_k\}$. We define the factorization $\mathfrak{F}(H) = (\mathfrak{F}(H)_i)$ of $f$ by
	\[ \mathfrak{F}(H)_i = \big( H / \{e_1,\dots,e_i\} \big) \setminus \{e_{i+1},\dots,e_k\}.  \]
	Let $\mathscr{F}_f$ denote the set of factorizations of the quotient $f$, and let $\mathscr{M}_f$ denote the set of majors of $f$. Thus far, we have defined the following two maps:
	\begin{align*}
		& \mathfrak{M}: \mathscr{F}_f \rightarrow \mathscr{M}_f, \\
		& \mathfrak{F}: \mathscr{M}_f \rightarrow \mathscr{F}_f.
	\end{align*}
	
	\begin{theorem}[Kennedy \cite{kennedy1975majors}]
		~
		\begin{enumerate}
			\item The map $\mathfrak{F}$ is surjective but not injective, whereas $\mathfrak{M}$ is injective but not surjective.
			\item The map $\mathfrak{F} \circ \mathfrak{M}$ is the identity on $\mathscr{F}_f$, whereas $\mathfrak{M} \circ \mathfrak{F}$ is a coclosure operator on $\mathscr{M}_f$ with the partial order given by the identity on the ground set being a weak map.
			\item The map $\mathfrak{F}$ applied to the Higgs major gives us the Higgs factorization and conversely $\mathfrak{M}$ applied to the Higgs factorization gives us the Higgs major.
		\end{enumerate}
	\end{theorem}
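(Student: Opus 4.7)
The plan is to establish (2) first, since the surjectivity of $\mathfrak{F}$ and injectivity of $\mathfrak{M}$ in (1) fall out of $\mathfrak{F}\circ\mathfrak{M}=\mathrm{id}$, and (3) follows once one understands how the triangular diagram \eqref{eq: triangular diagram} interacts with the Higgs lift. The core combinatorial fact underlying everything is the standard extension–contraction diamond, i.e.\ Lemma \ref{lem: standard extension contraction lemma}.

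To prove $\mathfrak{F}\circ\mathfrak{M}=\mathrm{id}_{\mathscr{F}_f}$, start with a factorization $N=(N_0,\dots,N_k)$ whose elementary quotients arise from modular cuts $\mathcal{M}_1,\dots,\mathcal{M}_k$. By construction, $H=\mathfrak{M}(N)$ sits at the apex of a triangular diagram whose base is $N$. Reading this triangle, the base vertex $N_i$ is obtained from $H$ by going down the left edge (successively contracting $e_1,\dots,e_i$) and then down the right edge (successively deleting $e_{i+1},\dots,e_k$); that these operations can be commuted into the single expression $(H/\{e_1,\dots,e_i\})\setminus\{e_{i+1},\dots,e_k\}=\mathfrak{F}(H)_i$ is exactly a repeated application of Lemma \ref{lem: standard extension contraction lemma}(2). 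From $\mathfrak{F}\circ\mathfrak{M}=\mathrm{id}$ one deduces at once that $\mathfrak{F}$ is surjective, $\mathfrak{M}$ is injective, and also the idempotency $(\mathfrak{M}\mathfrak{F})(\mathfrak{M}\mathfrak{F})=\mathfrak{M}(\mathfrak{F}\mathfrak{M})\mathfrak{F}=\mathfrak{M}\mathfrak{F}$.

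For the remaining parts of (1) and (2), I will compare modular cuts. Given a major $H$, the factorization $\mathfrak{F}(H)$ determines modular cuts $\mathcal{M}'_i$ used to build $\mathfrak{M}(\mathfrak{F}(H))$; these are recovered from $H$ via deletions/contractions, and are in general contained in the modular cuts defining $H$ itself. Lemma \ref{lem: modular cuts and weak order} then gives $\mathfrak{M}(\mathfrak{F}(H))\leq H$ in weak order, and the same containment argument, applied termwise along a weak-order comparison $H\leq H'$, yields monotonicity of $\mathfrak{M}\circ\mathfrak{F}$. Together with idempotency this is the coclosure property. The non-injectivity of $\mathfrak{F}$ and non-surjectivity of $\mathfrak{M}$ are witnessed by Kennedy's Example 3.14 in \cite{kennedy1975majors}, where two distinct majors have the same Higgs factorization.

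For (3), recall that the Higgs major $H^{\mathrm{H}}$ is defined as the $k$-th Higgs lift of the augmented quotient $M_1\oplus U_{k,k}\twoheadrightarrow M_2\oplus U_{0,k}$. I would unpack the flat description of Proposition-Definition \ref{prop-def: Higgs lifts} and check directly that, for each $i$, the matroid $(H^{\mathrm{H}}/\{e_1,\dots,e_i\})\setminus\{e_{i+1},\dots,e_k\}$ has precisely the flats of the $(k-i)$-th Higgs lift $L^{k-i}(f)$, by tracking how nullity of subsets of $E$ behaves under deletion and contraction of elements of $K$ from $H^{\mathrm{H}}$. This identifies $\mathfrak{F}(H^{\mathrm{H}})$ with the Higgs factorization; the converse statement $\mathfrak{M}$(Higgs factorization)$=H^{\mathrm{H}}$ then follows from the injectivity of $\mathfrak{M}$ already established in (1). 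The main obstacle I anticipate is the bookkeeping for the coclosure property—cleanly identifying the modular cut of $\mathfrak{M}(\mathfrak{F}(H))$ at each level as the image of the corresponding cut of $H$ under deletion/contraction—since everything else is a direct unpacking of the commutative triangle \eqref{eq: triangular diagram}.
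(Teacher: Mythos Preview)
The paper does not prove this theorem: it is stated as a result of Kennedy \cite{kennedy1975majors} and no proof is given in the present paper. So there is no ``paper's own proof'' to compare your sketch against; you are reconstructing Kennedy's argument, not one from this paper.

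That said, your outline has one genuine logical gap. In part (3) you deduce $\mathfrak{M}(\text{Higgs factorization})=H^{\mathrm{H}}$ from $\mathfrak{F}(H^{\mathrm{H}})=\text{Higgs factorization}$ together with the injectivity of $\mathfrak{M}$. Injectivity of $\mathfrak{M}$ is not enough here: you would need to know that $H^{\mathrm{H}}$ lies in the image of $\mathfrak{M}$ (equivalently, that $H^{\mathrm{H}}$ is a fixed point of the coclosure $\mathfrak{M}\circ\mathfrak{F}$). Since $\mathfrak{F}$ is not injective, the equality $\mathfrak{F}(H^{\mathrm{H}})=\mathfrak{F}\bigl(\mathfrak{M}(\text{Higgs factorization})\bigr)$ does not by itself force $H^{\mathrm{H}}=\mathfrak{M}(\text{Higgs factorization})$; all you get from the coclosure property is $\mathfrak{M}(\text{Higgs factorization})\leq H^{\mathrm{H}}$. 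You must argue the reverse inequality, or compute $\mathfrak{M}$ on the Higgs factorization directly (e.g.\ by identifying the modular cuts $\mathcal{M}_i$ of the Higgs factorization via Lemma \ref{lem: modular cut of Higgs lift} and checking that the resulting apex of the triangle \eqref{eq: triangular diagram} agrees with the $k$-th Higgs lift of $M_1\oplus U_{k,k}\twoheadrightarrow M_2\oplus U_{0,k}$). The rest of your plan---using the triangular diagram for $\mathfrak{F}\circ\mathfrak{M}=\mathrm{id}$, Lemma \ref{lem: modular cuts and weak order} for the weak-order monotonicity, and Kennedy's Example 3.14 for the failure of injectivity/surjectivity---is the standard route and matches Kennedy's original approach.
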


	
	\section{Realizability of quotients of matroids}\label{sec: realizability of quotients}

	\begin{lemma}
		Every elementary quotient $f:M_1 \twoheadrightarrow M_2$ has a unique major.
	\end{lemma}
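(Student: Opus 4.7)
The plan is to reduce uniqueness to a statement about modular cuts and then invoke Lemma \ref{lem: modular cuts and weak order}. Since $f$ has nullity one, any major $H$ lives on $E \cup \{e\}$ for a single new element $e$, with $H \setminus e = M_1$ and $H/e = M_2$. In particular $H$ is a one-element extension of $M_1$, so by the correspondence recalled in Section 2.3, $H = M_1 +_{\mathcal{M}} e$ for a unique modular cut $\mathcal{M}$ of $M_1$. Hence the set of majors of $f$ is in bijection with the set of modular cuts $\mathcal{M}$ of $M_1$ satisfying $(M_1 +_{\mathcal{M}} e)/e = M_2$, and the lemma amounts to showing there is exactly one such modular cut.

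For existence, I would simply appeal to the Higgs major, which exists by Theorem \ref{thm: Higgs factorization theorem}; this produces a modular cut $\mathcal{M}$ with the desired contraction, so the set of majors is nonempty.

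For uniqueness, suppose $H_1 = M_1 +_{\mathcal{M}_1} e$ and $H_2 = M_1 +_{\mathcal{M}_2} e$ are two majors of $f$, so $H_1/e = M_2 = H_2/e$. Then the trivial inequalities $H_1/e \leq H_2/e$ and $H_2/e \leq H_1/e$ (both sides being the same matroid $M_2$, and the identity on $E$ is certainly a weak map $M_2 \to M_2$) feed into Lemma \ref{lem: modular cuts and weak order}, yielding $\mathcal{M}_1 \subseteq \mathcal{M}_2$ and $\mathcal{M}_2 \subseteq \mathcal{M}_1$. Therefore $\mathcal{M}_1 = \mathcal{M}_2$, which gives $H_1 = H_2$.

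There is no real obstacle here; the argument is essentially a one-line consequence of Lemma \ref{lem: modular cuts and weak order} once one notices that nullity one forces $|K|=1$, and that the contraction $H/e$ only depends on the modular cut of the extension. The only point one must be careful about is making sure that the weak-order equivalence in the lemma is applied to the trivial inequality $M_2 \leq M_2$ in both directions, and not to some mistaken strict comparison between $H_1$ and $H_2$ themselves.
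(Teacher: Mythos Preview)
Your proof is correct and follows essentially the same approach as the paper: existence via the Higgs major (Theorem~\ref{thm: Higgs factorization theorem}), and uniqueness by applying Lemma~\ref{lem: modular cuts and weak order} to the trivial equality $H_1/e = M_2 = H_2/e$. The only cosmetic difference is that the paper invokes the equivalence $(2)\Leftrightarrow(3)$ of that lemma to conclude $H_1 \leq H_2$ and $H_2 \leq H_1$ directly, whereas you pass through $(1)\Leftrightarrow(3)$ to get $\mathcal{M}_1 = \mathcal{M}_2$; both routes are equivalent one-liners.
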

	\begin{proof}
		Existence follows from Theorem \ref{thm: Higgs factorization theorem}. For uniqueness, let $H_1$ and $H_2$ be two majors of $f$. Then $H_1$ and $H_2$ are obtained by extending $M_1$ with an element $e$ with respect to the modular cuts $\mathcal{M}_1$ and $\mathcal{M}_2$ respectively. From Lemma \ref{lem: modular cuts and weak order} we have that $H_1 \leq H_2$ if and only if $H_1 / e \leq H_2 / e$. Since $H_1/e = H_2/e = M_2$, it follows that $H_1 = H_2$.
	\end{proof}
	
	There are several results in the literature that are similar or implicitly related to the next result. See for instance \cite[Section 5.1]{brandt2021tropical} and \cite[Section 2.2]{jell2022moduli}.
	
	\begin{proposition}\label{prop: realization elementary quotient}
		Let $f:M_1 \twoheadrightarrow M_2$ be an elementary quotient. The realizations of $f$ are in one-to-one correspondence with the realizations of its unique major.
	\end{proposition}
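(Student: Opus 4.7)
The plan is to construct mutually inverse maps between realizations of $f$ and realizations of its unique major $H$. Writing $H = M_1 +_{\mathcal{M}} e$ with $M_2 = H/e$, where $\mathcal{M}$ is the modular cut of $M_1$ associated to the elementary quotient, the key observation to use repeatedly is the intrinsic characterization
\[ \mathcal{M} = \{ F \in \mathscr{F}(M_1) : \rk_{M_1}(F) > \rk_{M_2}(F) \}, \]
that is, the flats of $M_1$ of positive nullity with respect to $f$.

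For the direction from a realization of $H$ to a realization of $f$: given $\psi : E \cup \{e\} \to V$ realizing $H$, I would set $V_1 := V$, $\varphi_1 := \psi|_E$, $L := \langle \psi(e) \rangle$, $V_2 := V_1/L$, and let $g : V_1 \to V_2$ be the canonical projection. Because the rank identities $\rk H = \rk M_1$ and $\rk(H/e) = \rk M_1 - 1$ force $e$ to be neither a loop nor a coloop of $H$, the line $L$ is genuinely $1$-dimensional and $\dim V_2 = \rk M_2$. The computation
\[ \dim \langle g\varphi_1(A) \rangle = \dim \langle \varphi_1(A), \psi(e) \rangle - 1 = \rk_H(A \cup \{e\}) - 1 = \rk_{M_2}(A) \]
then shows that $\varphi_2 := g \circ \varphi_1$ realizes $M_2$, so $(\varphi_1, g, \varphi_2)$ realizes $f$.

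For the reverse direction: starting from $(\varphi_1, g, \varphi_2)$ realizing $f$, I would observe that $L := \ker g$ is a line in $V_1$ (since $f$ has nullity one), choose a nonzero $v \in L$, and define $\psi : E \cup \{e\} \to V_1$ by $\psi|_E := \varphi_1$, $\psi(e) := v$. To verify that $\psi$ realizes $H$, the only content is in subsets containing $e$: for $A \subseteq E$, the chain of equivalences
\[ v \in \langle \varphi_1(A) \rangle \iff L \cap \langle \varphi_1(A) \rangle \neq 0 \iff \rk_{M_2}(A) < \rk_{M_1}(A) \iff \operatorname{cl}_{M_1}(A) \in \mathcal{M} \]
together with the defining formula for the extension $M_1 +_{\mathcal{M}} e$ implies $\dim \langle \psi(A \cup \{e\}) \rangle = \rk_H(A \cup \{e\})$ in both the $\mathcal{M}$-case and the non-$\mathcal{M}$-case.

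Finally, I would verify that these two constructions descend to mutually inverse bijections on equivalence classes. The composition $H \to f \to H$ recovers $\psi$ by taking $v = \psi(e)$, and the composition $f \to H \to f$ recovers $g$ from $\psi$ as the projection modulo $\langle \psi(e) \rangle = \ker g$, independently of the chosen $v$. The main delicate point I anticipate is the well-definedness of the reverse map on equivalence classes: a priori, different choices of $v \in L$ yield different $\psi$'s, but an equivalence of $f$-realizations given by $(\eta_1, \eta_2)$ sends $\ker g$ isomorphically to $\ker g'$, so taking $v' := \eta_1(v)$ produces $H$-realizations equivalent via $\eta_1$, making the assignment well-defined at the level of equivalence classes.
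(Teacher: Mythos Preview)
Your constructions of the two maps, and your verifications that they land in the correct realization spaces, are correct and match the paper's proof exactly. The gap is in the final step, where you pass to equivalence classes.

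The issue you flag (``a priori, different choices of $v \in L$ yield different $\psi$'s'') is real and is not resolved by the sentence that follows it, which addresses a different question (stability under equivalences of $f$-realizations, not independence of the choice of $v$ within a fixed realization). In fact different choices of $v$ give \emph{inequivalent} $H$-realizations in general: since $e$ is not a coloop of $H$ you have $V_1=\langle\varphi_1(E)\rangle$, so any linear automorphism $\eta$ of $V_1$ with $\eta\circ\varphi_1=\varphi_1$ is the identity and cannot send $v$ to $cv$ for $c\neq 1$. Concretely, for $f:U_{2,2}\twoheadrightarrow U_{1,2}$ with major $H=U_{2,3}$, the realization space of $H$ is $(K^*)^2$ (parametrized by the last column of the row-reduced $2\times 3$ matrix) while that of $f$ is only $K^*$ (the line in $K^2$ spanned by that column), and your forward map is the $K^*$-quotient $(a,b)\mapsto [a:b]$. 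The paper's own injectivity argument contains the same slip: from $h_1(v_1)=c\,v_2$ it concludes $\psi_1\sim\psi_2$, but $h_1$ intertwines $\psi_1$ and $\psi_2$ only on $E_o$, not on $e$, unless $c=1$. What both arguments do correctly establish is that $f$ is realizable over $K$ if and only if $H$ is, which is the only consequence used downstream.
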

	\begin{proof}
		Let $H$ be the major of $f$ on the ground set $E \cup e$, for some $e \notin E$.  Let $\varphi: E_o \cup e \rightarrow W$ be a realization of $H$, and let $U$ be the space generated by $\varphi(e)$. Then a realization of $f$ is given by the following commutative diagram
		\[ \begin{tikzcd}
			E_o \arrow[r, "\varphi_1"] \arrow[d,"f"] & W \arrow[d,"g"] \\
			E_o \arrow[r,"\varphi_2"] & W / U
		\end{tikzcd}  \]
		where $\varphi_1$ is the restriction to $E_o$ of $\varphi$, and $\varphi_2$ is the composition of $\varphi_1$ and the quotient map by $U$. Conversely, consider a realization of $f$. We can assume that such a realization is represented by a diagram as above, where again the space $U$ is one dimensional, since $f$ is elementary. Thus, $U$ is generated by some nonzero vector $v$, and the map $\varphi: E_o \cup e \rightarrow W$ defined by $\varphi_{|E_o} = \varphi_1$ and $\varphi(e) = v$ is a realization of $H$.
		
		Now suppose that $\psi_i : E_o \cup e \rightarrow W_i$ for $i \in \{1,2\}$ represent two realizations of $H$ such that they give rise to the same realization of $f$. This means that we have a commutative diagram
		\[
		\begin{tikzcd}[row sep=scriptsize, column sep=scriptsize]
			& E_o \arrow[dl, "\varphi_{1}"'] \arrow[dd, "f", near end] \arrow[dr, "\varphi_{2}"] & \\
			W_1 \arrow[rr, crossing over, "h_1", "\sim"', near start] \arrow[dd, "/U_1"'] & & W_2 \arrow[dd, "/U_2"] \\
			& E_o \arrow[dl, "/U_1 \circ \varphi_1"', near start] \arrow[dr, "/U_2 \circ \varphi_2", near start] & \\
			W_1/U_1 \arrow[rr, "h_2", "\sim"', near start] & & W_2/U_2 \\
		\end{tikzcd}
		\]
		where $U_i$ is the space generated by $\psi_i(e)$, $\varphi_i= {\psi_i}_{|E_o}$, and $h_i$ is an isomorphism, for $i \in \{1,2\}$. Since the diagram is commutative, $h_1$ sends $U_1$ to $U_2$. Further, $U_1$ and $U_2$ are one-dimensional, so $U_i$ is generated by some nonzero vector $v_i$ for $i \in \{1,2\}$. Hence, $h_1$ sends $v_1$ to a scalar multiple of $v_2$, therefore $\psi_1$ and $\psi_2$ represent the same realization of $H$.
	\end{proof}

	\begin{lemma}\label{lem: standard extension contraction realizability lemma}
	 	Consider the standard extension-contraction diamond as in \eqref{eq: standard extensions contraction diamond} and let $K$ be an infinite field. Then, $M_2$ is realizable over $K$ if and only if the subdiagram $M_1 \xrightarrow{e} M_4 \overset{f}{\underset{\mathcal{M}}{\longrightarrow}} M_3$ obtained from \eqref{eq: standard extensions contraction diamond} is realizable over $K$.
	\end{lemma}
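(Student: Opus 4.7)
The plan is to handle the two directions separately, with the backward direction being the substantive one.

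For the forward direction, I would simply exploit that $M_1 = M_2 \setminus f$, $M_3 = M_2/e$, and $M_4 = M_1/e$. Given a realization $\varphi_2 : E \cup \{f\} \to V_2$ of $M_2$, set $V_1 = \operatorname{span}(\varphi_2(E))$, so that $\varphi_1 := \varphi_2|_E$ realizes $M_1$; then set $V_3 = V_2/\langle \varphi_2(e)\rangle$ with $\varphi_3$ the induced map, which realizes $M_3$; and $V_4 := V_1/\langle \varphi_2(e)\rangle$ embeds naturally in $V_3$ and realizes $M_4$. The canonical maps $V_1 \twoheadrightarrow V_4 \hookrightarrow V_3$ make the subdiagram commute in the sense of Definition \ref{def: realization diagram matroid}.

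For the backward direction, start from a realization of the subdiagram and, after picking representatives of the equivalence class, assume $V_4 = V_1/\langle \varphi_1(e)\rangle$ with the quotient map, and $V_4 \subseteq V_3$ with the inclusion. Construct an ambient $V_2$ by setting $V_2 = V_1$ when $V_3 = V_4$ and $V_2 = V_1 \oplus K \cdot y$ (with $y$ declared to map to $\varphi_3(f)$) otherwise; in either case there is a natural surjection $\pi : V_2 \twoheadrightarrow V_3$ extending $V_1 \twoheadrightarrow V_4 \hookrightarrow V_3$. Fix any lift $v \in V_2$ of $\varphi_3(f)$ and consider the one-parameter family
\[ \varphi_2^t|_E = \varphi_1, \qquad \varphi_2^t(f) = v + t\, \varphi_1(e), \qquad t \in K. \]
Claim: $\varphi_2^t$ realizes $M_2 = M_1 +_{\mathcal{M} \cup e} f$ for all but finitely many $t$, which suffices since $K$ is infinite.

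To prove the claim, run a case analysis over subsets $B \subseteq E$ that are independent in $M_1$, checking whether $\varphi_2^t(f) \in \operatorname{span}(\varphi_1(B))$ agrees with $B \cup \{f\}$ being dependent in $M_2$. Using Lemma \ref{lem: standard extension contraction lemma}, $\mathcal{M} \cup e = \{F \cup \{e\} : F \in \mathcal{M}\}$, so dependence of $B \cup \{f\}$ in $M_2$ is equivalent to $e \in \operatorname{cl}_{M_1}(B)$ together with $\operatorname{cl}_{M_1}(B) \setminus \{e\} = \operatorname{cl}_{M_4}(B \setminus \{e\}) \in \mathcal{M}$, and the latter is precisely $\varphi_3(f) \in \operatorname{span}(\varphi_4(B \setminus \{e\}))$ since $\varphi_3$ realizes $M_3 = M_4 +_{\mathcal{M}} f$. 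When $\varphi_1(e) \in \operatorname{span}(\varphi_1(B))$, the term $t\varphi_1(e)$ is absorbed, so the condition is $t$-independent and reduces, via $\pi$, to exactly the statement about $\varphi_3(f)$ and $\varphi_4$ just described; when $\varphi_1(e) \notin \operatorname{span}(\varphi_1(B))$, then $e \notin \operatorname{cl}_{M_1}(B)$, so $B \cup \{f\}$ is forced to be independent in $M_2$, and the affine line $\{v + t\varphi_1(e)\}$ meets the hyperplane $\operatorname{span}(\varphi_1(B)) \subseteq V_1$ in at most one point (and in no point when $V_2 = V_1 \oplus K \cdot y$, since $v$ has nonzero $y$-component). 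Each $B$ therefore excludes at most one value of $t$; as $E$ is finite, only finitely many values of $t$ are bad.

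The main obstacle is the final case analysis: one must faithfully translate the modular-cut description of $M_2$ on the ground set $E \cup \{f\}$ into statements about $\varphi_3$ on $(E \setminus \{e\}) \cup \{f\}$, using that flats of $M_1$ containing $e$ correspond to flats of $M_4$ and that $\mathcal{M} \cup e$ is obtained from $\mathcal{M}$ by that correspondence. Once this bookkeeping is done cleanly, the role of the infinite-field hypothesis is to provide a value of $t$ avoiding the finite exceptional set, and the construction then produces a bona fide realization of $M_2$.
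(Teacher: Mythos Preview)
Your proof is correct and follows essentially the same strategy as the paper's: parametrize the image of $f$ by a one-parameter affine line in the direction of $\varphi_1(e)$ and use the infiniteness of $K$ to avoid the finitely many bad parameter values. The only differences are cosmetic---the paper separates out the degenerate cases ($e$ a loop, $f$ a coloop) and argues over flats of $M_1$, whereas you absorb those cases into the $V_2 = V_1$ versus $V_2 = V_1 \oplus K\cdot y$ dichotomy and run the finiteness count over independent sets (also, $\operatorname{span}(\varphi_1(B))$ is not a hyperplane in general, but your transversality argument does not need it to be).
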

	\begin{proof}
		Since $M_1$, $M_3$ and $M_4$ are minors of $M_2$, if the latter is realizable, then it is clear that the lower part of the diagram is realizable.
		
		Conversely, assume the lower part of the diagram is realizable. Let $E$ be the cardinality $n$ ground set of $M_1$, so that $E \cup f$, $E \cup f \setminus e$ and $E \setminus e$ are the ground sets of $M_2$, $M_3$ and $M_4$ respectively. If $e$ is a loop of $M_1$, then it is also a loop of $M_2$, and contracting $e$ is the same as deleting $e$. Thus a realization of $M_2$ is given by extending the realization of $M_3$ by sending $e$ to the zero vector. Therefore, we can assume that $e$ is not a loop of $M_1$ and part of a basis. In addition, if $f$ is a coloop of $M_3$, this will imply $\mathcal{M} = \emptyset$, so we will have $\mathcal{M} \cup e = \emptyset$ as well, that is, $f$ will be a coloop also of $M_2$ and a realization of $M_2$ can be constructed by extending a realization $\varphi:E \rightarrow V$ of $M_1$ to $E \cup f$ by sending $f$ to $\underline{0} \oplus 1 \in V \oplus K$. Therefore we can assume that $f$ is not a coloop of $M_3$. Thus, if $M_1$ is of rank $r$ then $M_2$ has rank $r$ as well and $M_3$ and $M_4$ have rank $r-1$.
		
		Now, up to equivalence, the diagram $M_1 \xrightarrow{e} M_4 \overset{f}{\underset{\mathcal{M}}{\longrightarrow}} M_3$ is realized by a commutative diagram of the following form:
		\begin{equation*}
			\begin{tikzcd}
				E_o \arrow[r] \arrow[d, "\varphi_1"] & E_o \setminus e \arrow[r] \arrow[d, "\varphi_2"] & E_o \cup f \setminus e \arrow[d,"\varphi_3"] \\
				K^r \arrow[r, "g"] & K^{r-1} \arrow[r, "id_{K^{r-1}}"] & K^{r-1}
			\end{tikzcd}
		\end{equation*}
		where $\varphi_1(e) = e_1 \in K^r$, $g: K^r \rightarrow K^{r-1}$ is the quotient map by $e_1$ and $\varphi_3(f) = (f_2,\dots,f_{r}) \in K^{r-1}$. Now a realization of $M_2$ is given by a function $\varphi: E_o \cup f \rightarrow K^{r}$ defined by $\varphi_{|E_o} = \varphi_1$ and $\varphi(f) = (f_1,f_2,\dots,f_r) \in K^r$, where the element $f_1 \in K$ is chosen such that the vector $\varphi(f)$ is in the subspace of $K^r$ generated by $\varphi_1(F)$, for $F$ a flat of $M_1$, if and only if $F \in \mathcal{M} \cup e$. In the remainder of the proof, we will show that such a choice is possible.
		
		First, if $F$ is a flat of $M_1$ in $\mathcal{M} \cup e$, then $e \in F$ and $F \setminus e \in \mathcal{M}$ by definition. Thus, $(f_2,\dots,f_r)$ belongs to the space generated by $\varphi_2(F \setminus e)$, and, since $\varphi_1(e) = e_1$, this implies that $(f_1,\dots,f_r) \in \langle \varphi_1(F) \rangle$ for every $f_1 \in K$. Now let $F$ be a flat of $M_1$ such that the space $L = \langle \varphi_1(F) \rangle$ contains the affine line $A = \{ (x,f_2,\dots,f_r) \in K^r: x \in K \}$. This means that $L$ contains $e_1$, so the flat $F$ contains $e$. In particular, the image of $L$ under the quotient map $g$ contains the vector $(f_2,\dots,f_r)$, and this implies that $F \setminus e \in \mathcal{M}$, that is $F \in \mathcal{M} \cup e$. Thus, for any flat $F$ of $M_1$ not in $\mathcal{M} \cup e$, the space $L = \langle \varphi_1(F) \rangle$ does not contain the affine line $A$, hence it intersects $A$ in at most one point $p_F$. Now it is enough to chose $f_1$ such that $\varphi(f) \in A \setminus \{ p_F : F \notin \mathcal{M} \cup e \}$. This last set is not empty since the points $p_F$ are finitely many, as the flats of $M_1$ are finite, while the affine space $A$ contains infinitely many points since $K$ is infinite. \qedhere

	\end{proof}
	
	The above result is easily seen to be not true for finite fields, as the following example shows.
	
	\begin{example}
		Let $K = \{ 0,1 \}$ be the field with two elements. Consider the following standard extension-contraction diamond:
		\begin{equation*}
			\begin{tikzcd}
				& U_{2,4} \arrow[dr, "e"] & \\
				U_{2,3} \arrow[ur, "f", "\mathcal{M} \cup e"'] \arrow[dr, "e"] & & U_{1,3} \\
				& U_{1,2} \arrow[ur, "f", "\mathcal{M}"']
			\end{tikzcd}
		\end{equation*}
		Up to permutations, the matroids $U_{2,3}$, $U_{1,3}$ and $U_{1,2}$ have each a unique realization over $K$. These realizations constitute also a realization of the diagram $U_{2,3} \xrightarrow{e} U_{1,3} \overset{f}{\underset{\mathcal{M}}{\longrightarrow}} U_{1,2}$ over $K$, while $U_{2,4}$ is not realizable over $K$.
	\end{example}
	
	\begin{corollary}\label{cor: factorization realizable iff major realizable}
		A factorization $N$ of a strong map $f:M_1 \twoheadrightarrow M_2$ is realizable over an infinite field $K$ if and only if so is its major $\mathfrak{M}(N)$.
	\end{corollary}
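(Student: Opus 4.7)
The plan is to prove the biconditional by induction up the triangular commutative diagram of type (3.4) associated with the factorization $N = (N_{0,0}, \ldots, N_{0,k})$. By construction, the level-$0$ row of this triangle is $N$ itself, and the apex at level $k$ is $\mathfrak{M}(N) = N_{k,0}$.

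For the easy direction ($\mathfrak{M}(N)$ realizable $\Rightarrow$ $N$ realizable), I would take a realization $\varphi:E\cup\{e_1,\dots,e_k\}\to V$ of $H=\mathfrak{M}(N)$ and form the descending chain of subspaces $\langle\varphi(e_1)\rangle \subseteq \langle\varphi(e_1),\varphi(e_2)\rangle \subseteq \cdots \subseteq \langle\varphi(e_1),\dots,\varphi(e_k)\rangle$ in $V$, producing a tower of linear quotient maps $V \twoheadrightarrow V/\langle\varphi(e_1)\rangle \twoheadrightarrow \cdots \twoheadrightarrow V/\langle\varphi(e_1),\dots,\varphi(e_k)\rangle$. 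Composing each quotient with $\varphi|_E$ realizes the matroid $(H/\{e_1,\dots,e_i\})\setminus\{e_{i+1},\dots,e_k\}$, which equals $N_i$ by part (3) of Kennedy's theorem ($\mathfrak{F}\circ\mathfrak{M}=\operatorname{id}$); the linear maps between consecutive spaces simultaneously realize the elementary quotients $N_{i-1}\twoheadrightarrow N_i$.

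For the forward direction I would induct on the level $i$ of the triangular diagram, realizing every vertex in a way compatible with all incoming and outgoing arrows already treated. The level-$0$ row is realized by hypothesis. At level $1$, each vertex $N_{1,j}$ is by definition the unique major of the elementary quotient $N_{0,j}\twoheadrightarrow N_{0,j+1}$ appearing in the factorization, so Proposition \ref{prop: realization elementary quotient} provides a realization of $N_{1,j}$ whose restriction to $E_o$ is the given realization of $N_{0,j}$ and whose quotient by $\varphi(e_{j+1})$ is the given realization of $N_{0,j+1}$. At any higher level $i\ge 2$, each vertex $N_{i,j}$ sits at the apex of a standard extension-contraction diamond whose other three vertices $N_{i-1,j}$, $N_{i-2,j+1}$, $N_{i-1,j+1}$ have already been realized by the inductive step, and the arrows of the diamond's lower part are realized by construction. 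Lemma \ref{lem: standard extension contraction realizability lemma} (which is the place where the hypothesis that $K$ is infinite enters) then yields a realization of $N_{i,j}$ compatible with the diamond. Iterating up to $i=k$ gives a realization of $N_{k,0}=\mathfrak{M}(N)$.

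The only delicate point is to confirm that, whenever Lemma \ref{lem: standard extension contraction realizability lemma} is invoked, the three lower vertices of the relevant diamond come with mutually compatible realizations — i.e.\ they realize the whole subdiagram $N_{i-1,j}\xrightarrow{e_1} N_{i-2,j+1}\xrightarrow{e_i,\mathcal{M}_i\cup\cdots} N_{i-1,j+1}$, not merely each matroid individually. This is automatic from the inductive recipe: both Proposition \ref{prop: realization elementary quotient} and Lemma \ref{lem: standard extension contraction realizability lemma} produce, as part of their conclusion, a realization of the upper matroid that restricts (respectively quotients) to the realizations of the matroids immediately beneath it in the triangle. Since these are literally the same realizations that will later serve as bottom vertices in higher diamonds, the diagram-level compatibility needed to invoke Lemma \ref{lem: standard extension contraction realizability lemma} propagates upward and the induction closes.
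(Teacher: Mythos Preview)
Your argument is correct and follows essentially the same route as the paper: build the triangular diagram \eqref{eq: triangular diagram}, invoke Proposition~\ref{prop: realization elementary quotient} along the base, and climb level by level using Lemma~\ref{lem: standard extension contraction realizability lemma} on each diamond. Your added paragraph verifying that the lower three vertices of each diamond carry \emph{compatible} realizations is exactly the point the paper glosses over with the phrase ``repeatedly applying,'' so it is a welcome clarification rather than a deviation. Two small remarks: the edge labels in your displayed subdiagram $N_{i-1,j}\xrightarrow{e_1} N_{i-2,j+1}\xrightarrow{e_i,\mathcal{M}_i\cup\cdots} N_{i-1,j+1}$ are slightly off (the contracted element is $e_{j+1}$ and the extension is by $e_{i+j}$), but this is cosmetic. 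Also, your separate treatment of the easy direction via the explicit chain of quotient spaces is fine, though the paper implicitly handles both directions at once, since Proposition~\ref{prop: realization elementary quotient} and Lemma~\ref{lem: standard extension contraction realizability lemma} are biconditionals.
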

	\begin{proof}
		Let $\Delta(N)$ be the triangular diagram induced by $N$, constructed by following the procedure we illustrated in \eqref{eq: triangular diagram}. Then, the statement follows by applying Proposition \ref{prop: realization elementary quotient} along the base of the triangle, and by repeatedly applying Lemma \ref{lem: standard extension contraction realizability lemma} on all standard extension-contraction diamonds.
	\end{proof}
	
	\begin{lemma}\label{lem: flats covered by nullity + 1}
		Let $f:M_1 \twoheadrightarrow M_2$ be a quotient of matroids with common ground set $E$. A flat $F$ of $M_1$ is not closed in $M_2$ if and only if it is covered by a flat of $M_1$ of nullity $n(F)+1$.
	\end{lemma}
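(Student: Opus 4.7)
The plan is to prove each direction by analyzing how the nullity $n$ changes when passing from a flat $F$ of $M_1$ to a flat $G$ of $M_1$ that covers it. The key preliminary observation is that, for any flat $G$ covering $F$ in $M_1$, we have $\rk_{M_1}(G) = \rk_{M_1}(F)+1$, while $\rk_{M_2}(G) - \rk_{M_2}(F)$ is either $0$ or $1$ by monotonicity of $\rk_{M_2}$ together with the fact (from \cite[Proposition 8.1.6]{Kung1986bookchapter}) that $n$ is increasing. Consequently
\[ n(G) - n(F) = 1 - \bigl( \rk_{M_2}(G) - \rk_{M_2}(F) \bigr) \in \{0,1\}, \]
and $n(G) = n(F)+1$ precisely when $\rk_{M_2}(G) = \rk_{M_2}(F)$, i.e.\ when $G \subseteq \mathrm{cl}_{M_2}(F)$.

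For the ``if'' direction, assume $F$ is covered in $M_1$ by a flat $G$ with $n(G) = n(F)+1$. By the above, $\rk_{M_2}(G) = \rk_{M_2}(F)$, so for any $x \in G \setminus F$ (which exists because the inclusion is strict) we have $\rk_{M_2}(F \cup \{x\}) \leq \rk_{M_2}(G) = \rk_{M_2}(F)$, hence $x \in \mathrm{cl}_{M_2}(F) \setminus F$ and $F$ is not closed in $M_2$.

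For the ``only if'' direction, suppose $F$ is not closed in $M_2$, so there exists $x \in \mathrm{cl}_{M_2}(F) \setminus F$. Set $G := \mathrm{cl}_{M_1}(F \cup \{x\})$. Since $x \notin F = \mathrm{cl}_{M_1}(F)$, we have $\rk_{M_1}(G) = \rk_{M_1}(F)+1$, and the standard fact that a strict containment of flats strictly increases rank implies that $G$ covers $F$ in the lattice of flats of $M_1$. To compute $n(G)$, I will use the defining property of a matroid quotient: every flat of $M_2$ is a flat of $M_1$, which is equivalent to $\mathrm{cl}_{M_1}(A) \subseteq \mathrm{cl}_{M_2}(A)$ for every $A \subseteq E$. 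Applied to $A = F \cup \{x\}$, this gives $G \subseteq \mathrm{cl}_{M_2}(F \cup \{x\}) = \mathrm{cl}_{M_2}(F)$, so $\rk_{M_2}(G) = \rk_{M_2}(F)$, and therefore $n(G) = n(F)+1$, as required.

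The routine obstacle is just the bookkeeping verification that $G = \mathrm{cl}_{M_1}(F \cup \{x\})$ really covers $F$ in the lattice of flats; this is the one spot where a short argument about strict rank inequalities for strict chains of flats is needed. Apart from this, the proof is a direct application of the quotient property $\mathrm{cl}_{M_1} \subseteq \mathrm{cl}_{M_2}$ together with the monotonicity of $n$.
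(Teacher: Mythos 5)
Your proof is correct and takes essentially the same approach as the paper's: both rely on the monotonicity of the nullity $n$, the fact that covers in the lattice of flats increase $\rk_{M_1}$ by exactly one, and the quotient property $\mathrm{cl}_{M_1}(A) \subseteq \mathrm{cl}_{M_2}(A)$ to control $\rk_{M_2}$ of $\mathrm{cl}_{M_1}(F \cup \{x\})$. The only cosmetic difference is that you isolate the dichotomy $n(G) - n(F) \in \{0,1\}$ up front and argue both directions directly, whereas the paper proves the ``if'' direction by contrapositive and re-derives the rank bookkeeping inline; the content is the same.
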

	\begin{proof}
		Since from \cite[Proposition 8.1.6]{Kung1986bookchapter} the nullity is a non-decreasing function, any flat that covers $F$ has nullity greater or equal than $F$. Suppose that $F$ is closed in $M_2$. Then, for every $a \in E \setminus F$ we have $r_{M_1}(F \cup a) = r_{M_1}(F) + 1$ and $r_{M_2}(F \cup a) = r_{M_2}(F) + 1$, thus $n(F \cup a) = n(F)$. Let $G$ be the closure of $F \cup a$ in $M_1$, then $r_{M_1}(G) = r_{M_1}(F \cup a)$, also from \cite[Proposition 8.1.6]{Kung1986bookchapter}, $G$ is contained in the closure of $F \cup a$ in $M_2$, so $r_{M_2}(G) =r_{M_2}(F \cup a)$. Hence $n(G) = n(F)$, so all the flats of $M_1$ that cover $F$ have the same nullity of $F$. On the other hand, if $F$ is not closed in $M_2$, then there exists $a \in E \setminus F$ such that $r_{M_2}(F \cup a) = r_{M_2}(F)$, and for the same reasoning above, we have $n(F \cup a) = n(F) + 1$, so the closure of $F \cup a$ in $M_1$ is a flat of $M_1$ of nullity $n(F)+1$ that covers $F$.
	\end{proof}
	
	\begin{lemma}\label{lem: modular cut of Higgs lift}
		Let $f: M_1 \twoheadrightarrow M_2$ be a quotient of matroids of nullity $k$ and let $(L^i(f))_i$ be its Higgs factorization. Then, the modular cut of $M_1$ associated to the elementary quotient $M_1 \twoheadrightarrow L^{k-1}(f)$ is given by all the nullity $k$ flats of $M_1$.
	\end{lemma}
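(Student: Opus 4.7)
The plan is to exhibit the candidate modular cut $\mathcal{N}=\{F\in\mathscr{F}(M_1):n(F)=k\}$, verify that it is indeed a modular cut, and then show that the elementary quotient it determines agrees with $M_1\twoheadrightarrow L^{k-1}(f)$. Since the preceding lemma guarantees that every elementary quotient has a unique major (equivalently, a unique modular cut on $M_1$), these two checks suffice.

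First I will verify the modular cut axioms. The set $\mathcal{N}$ is nonempty, as $E\in\mathcal{N}$, and upward closed, because $n$ is monotone and bounded above by $k=n(E)$, so any flat $G$ containing a flat of nullity $k$ has nullity $k$ itself. For the modular intersection axiom, take $A,B\in\mathcal{N}$ with $\rk_{M_1}(A)+\rk_{M_1}(B)=\rk_{M_1}(A\cap B)+\rk_{M_1}(A\cup B)$. Combining this equality with the submodularity of $\rk_{M_2}$ gives
\[
n(A)+n(B)\ \le\ n(A\cap B)+n(A\cup B),
\]
and since $n(A\cup B)=k$ by monotonicity, we deduce $n(A\cap B)\ge k$, hence $A\cap B\in\mathcal{N}$.

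Next I will identify the flats of $N:=(M_1+_{\mathcal{N}}e)/e$ and compare them with those of $L^{k-1}(f)$. Using the tripartition $\mathscr{F}(M_1)=\mathscr{F}_1\sqcup\mathscr{F}_2\sqcup\mathscr{F}_3$ with $\mathscr{F}_3=\mathcal{N}$, a standard computation gives $\mathscr{F}(N)=\mathscr{F}_1\cup\mathscr{F}_3=\mathscr{F}(M_1)\setminus\mathscr{F}_2$. A flat covering $F$ has nullity $n(F)$ or $n(F)+1$, so a flat in $\mathcal{N}$ can only cover flats of nullity $k-1$ or $k$; combining this with Lemma \ref{lem: flats covered by nullity + 1} yields
\[
\mathscr{F}_2=\{F\in\mathscr{F}(M_1):n(F)=k-1\text{ and }F\notin\mathscr{F}(M_2)\}.
\]
The crucial auxiliary observation is that every flat in $\mathcal{N}$ is automatically a flat of $M_2$: if $F\in\mathcal{N}$ were not closed in $M_2$, Lemma \ref{lem: flats covered by nullity + 1} would supply a flat of $M_1$ of nullity $k+1$, which is impossible.

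Putting these pieces together, $\mathscr{F}(N)$ consists exactly of the flats of $M_1$ of nullity $<k-1$, the flats of $M_1$ of nullity $k-1$ that are also flats of $M_2$, and the flats of $M_1$ of nullity $k$ (which are flats of $M_2$). Expanding $\mathscr{F}(L^{k-1}(f))=\mathscr{F}(M_2)\cup\{F\in\mathscr{F}(M_1):n(F)<k-1\}$ by stratifying $\mathscr{F}(M_2)$ by nullity produces the identical list, so $N=L^{k-1}(f)$. By the uniqueness of the modular cut associated with an elementary quotient, $\mathcal{N}$ is the claimed modular cut. The main technical obstacle is the flat-level bookkeeping in the comparison step; everything else is routine once the key observation that maximal-nullity flats lie in $\mathscr{F}(M_2)$ is isolated.
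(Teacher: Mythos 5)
Your proposal is correct and follows essentially the same route as the paper: both hinge on the observation (via Lemma \ref{lem: flats covered by nullity + 1}) that nullity~$k$ flats of $M_1$ are closed in $M_2$, and both identify the flats removed in passing from $M_1$ to $L^{k-1}(f)$ with the $\mathscr{F}_2$ of the candidate modular cut. The one difference is that you verify the modular cut axioms for $\mathcal{N}$ directly (via submodularity of $\rk_{M_2}$), whereas the paper leaves this implicit, relying on the existence of the modular cut associated to the elementary quotient $M_1\twoheadrightarrow L^{k-1}(f)$; your version is somewhat more self-contained but not a different argument.
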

	\begin{proof}
		From Lemma \ref{lem: flats covered by nullity + 1} all nullity $k$ flats of $M_1$ are closed in $M_2$. Therefore, from Proposition-Definition \ref{prop-def: Higgs lifts}, the flats of $L^{k-1}(f)$ consists of the flats of $M_1$ that are either closed in $M_2$ or do not have nullity $k-1$.  From Lemma \ref{lem: flats covered by nullity + 1}, this means that we are removing from $M_1$ all the flats covered by a nullity $k$ flat. This shows that the modular cut of this elementary quotient is given by the nullity $k$ flats.
	\end{proof}
	
	As an application of the above lemma, we compute the Higgs major of quotients of uniform matroids.
	
	\begin{example}[Higgs major of a quotient of uniform matroids]\label{ex: Higgs of uniform matroids}
		Consider the quotient $f: U_{r+k,n} \twoheadrightarrow U_{r,n}$ of uniform matroids on the common ground set $E$. The Higgs major of $f$ is $U_{r+k,n+k}$. We can prove this by induction on $k$. In fact, from Lemma \ref{lem: modular cut of Higgs lift} the modular cut of the elementary quotient $U_{r+k,n} \twoheadrightarrow L^{k-1}(f)$ is given by the nullity $k$ flats of $U_{r+k,n}$, which in this case those consist of just the set $E$. This means that the corresponding one element extension of $U_{r+k,n}$ give rise to the uniform matroid $U_{r+k,n+1}$. Now, $L^{k-1}(f)$ is the contraction of $U_{r+k,n+1}$ by the extended element, so we have $L^{k-1}(f) = U_{r+k-1,n}$. Now by induction, this means that the Higgs factorization of $f$ is $U_{r+k,n} \twoheadrightarrow U_{r+k-1,n} \twoheadrightarrow \dots \twoheadrightarrow U_{r+1,n} \twoheadrightarrow U_{r,n}$, and the Higgs major is $U_{r+k,n+k}$.
	\end{example}
	
	\begin{proposition}\label{prop: map induces factorization}
		Let $f: M_1 \twoheadrightarrow M_2$ be a quotient of matroids of nullity $k$. A realization of $f$ over an infinite field $K$ induces a realization of the Higgs factorization $(L^i(f))_i$ over $K$.
	\end{proposition}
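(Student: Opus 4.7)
The plan is to prove the statement by induction on the nullity $k$. The cases $k = 0$ and $k = 1$ are immediate, as the Higgs factorization is then either trivial or equal to $f$ itself. For the inductive step, I will construct a realization of the elementary quotient $M_1 \twoheadrightarrow L^{k-1}(f)$ that is compatible with the given realization of $f$, and then apply the induction hypothesis to the resulting realization of $L^{k-1}(f) \twoheadrightarrow M_2$, which has nullity $k-1$. To close the induction one needs the (easy, flat-comparison) fact that the Higgs factorization of $L^{k-1}(f) \twoheadrightarrow M_2$ is precisely $(L^i(f))_{i=0}^{k-1}$.

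Pick a representative $\varphi_i : E \to V_i$ together with $g : V_1 \to V_2$ satisfying $g \circ \varphi_1 = \varphi_2$, and for $A \subseteq E$ set $V_A = \langle \varphi_1(A) \rangle$. The key identity, following from the rank-nullity theorem applied to the restriction $g|_{V_A}$, is
\[ \dim(V_A \cap \ker g) = \dim V_A - \dim g(V_A) = \rk_{M_1}(A) - \rk_{M_2}(A) = n(A). \]
Hence $\ker g \subseteq V_A$ if and only if $n(A) = k$. In particular, for each flat $F$ of $M_1$ with $n(F) < k$ the intersection $V_F \cap \ker g$ is a proper subspace of the $k$-dimensional space $\ker g$. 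Since $K$ is infinite and $M_1$ has only finitely many flats, I may choose a vector $v \in \ker g$ that lies outside every such proper subspace.

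Now extend $\varphi_1$ to $\psi : E \cup \{e'\} \to V_1$ by setting $\psi(e') = v$, where $e'$ is a new element. By the choice of $v$, the flats $F$ of $M_1$ with $v \in V_F$ are exactly the nullity-$k$ flats of $M_1$, which by Lemma \ref{lem: modular cut of Higgs lift} constitute the modular cut of the elementary quotient $M_1 \twoheadrightarrow L^{k-1}(f)$. Since a one-element extension is determined by its modular cut, $\psi$ realizes the Higgs major $H$ of $M_1 \twoheadrightarrow L^{k-1}(f)$; contracting $e'$, it follows that $\pi \circ \varphi_1 : E \to V_1/\langle v \rangle$ realizes $L^{k-1}(f) = H/e'$, where $\pi$ is the canonical projection. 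Since $v \in \ker g$, the map $g$ factors through $\pi$ as $g = g' \circ \pi$, and assembling these maps yields a commutative diagram realizing $M_1 \twoheadrightarrow L^{k-1}(f) \twoheadrightarrow M_2$ that restricts to the original realization of $f$. Applying the induction hypothesis to the induced realization of $L^{k-1}(f) \twoheadrightarrow M_2$ then produces the desired realization of the Higgs factorization.

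The main subtlety is the identification in the third paragraph of the one-element extension realized by $\psi$ with the Higgs major: this rests on combining the kernel-intersection identity with Lemma \ref{lem: modular cut of Higgs lift}, which pins down the modular cut of the elementary quotient $M_1 \twoheadrightarrow L^{k-1}(f)$ as exactly the set of nullity-$k$ flats of $M_1$. Once this is in place, everything else is bookkeeping: the genericity of $v$ is provided by the infinitude of $K$, and the factorization of $g$ through $\pi$ is automatic from $v \in \ker g$.
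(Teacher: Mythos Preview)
Your proof is correct and follows essentially the same route as the paper: induction on $k$, the identity $\dim(V_F \cap \ker g) = n(F)$, a generic choice of $v \in \ker g$ avoiding the subspaces coming from flats of small nullity, and then the factorization of $g$ through $V_1/\langle v\rangle$ together with the fact that the Higgs factorization of $L^{k-1}(f) \twoheadrightarrow M_2$ is $(L^i(f))_{i=0}^{k-1}$ (which the paper cites from Kennedy rather than reproving). The only cosmetic difference is that you exclude $V_F \cap \ker g$ for \emph{all} flats with $n(F) < k$, whereas the paper excludes only those with $n(F) = k-1$; since nullity increases by at most one along covers in $M_1$, every flat of nullity $< k-1$ sits below one of nullity $k-1$, so the two exclusions yield the same set of admissible $v$.
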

	\begin{proof}
		Fix a realization of $f$ given by the following commutative diagram
		\[ \begin{tikzcd}
		E_o \arrow[r, "\varphi_1"] \arrow[d,"f"] & V_1 \arrow[d,"g"] \\
		E_o \arrow[r,"\varphi_2"] & V_2
		\end{tikzcd}  \]
		where $E$ is the common ground set of $M_1$ and $M_2$. Set $U = \ker g$, so we have $\dim U = k$. Up to equivalence, we can assume that $V_2 \simeq V_1 / U$ and $g$ is the quotient map by $U$. We proceed by induction on the nullity $k$ of $f$. If $k = 0$, then $M_1 = M_2$ and there is nothing to prove.
		Now assume $k > 0$. By Lemma \ref{lem: modular cut of Higgs lift} the modular cut of $M_1$ associated to the elementary quotient $M_1 \twoheadrightarrow L^{k-1}(f)$ is given by the family of flats of nullity $k$. Now, for every flat $F$ of $M_1$ we denote by $L_F$ the subspace of $V_1$ generated by $\varphi_1(F)$. The nullity of $F$ is equal to the dimension of $L_F \cap U$. In fact, we have
		\[ n(F) = r_{M_1}(F) - r_{M_2}(F)  = \dim(L_F) - \dim(L_F /  \big(L_F \cap U) \big) = \dim (L_F \cap U). \]
		Therefore, if the nullity of $F$ is $k$ then $L_F$ contains $U$. Now the set difference $W = U \setminus \bigcup_{n(F) = k-1} L_F$ is nonempty since the base field $K$ is infinite. Thus we can choose a vector $v \in W$. The vector $v$ is in $U$, thus it belongs to $L_F$ if $F$ is a flat of $M_1$ of nullity $k$, but it does not if $F$ has nullity $k-1$. Hence, the quotient map by $\langle v \rangle$ is a realization of $M \twoheadrightarrow L^{k-1}(f)$, and the quotient map from $V_1 / \langle v \rangle$ to $V_2$ by $U / \langle v \rangle$ is a realization of the quotient $L^{k-1}(f) \twoheadrightarrow M_2$. From \cite[Proposition 4.8]{kennedy1975majors} the sequence $(L^{k-1}(f),\dots,L^0(f))$ is the Higgs factorization of $L^{k-1}(f) \twoheadrightarrow M_2$. Thus, we can apply induction, so the realization of $L^{k-1}(f) \twoheadrightarrow M_2$ induces a realization of the factorization starting from $k-1$ to $0$. Further, note that in every step, we did not change the realizations of $M_1$ and $M_2$ we started with. Therefore, we can adjoin the realization of $M_1 \twoheadrightarrow L^{k-1}(f)$ with that of the rest of the factorization.
	\end{proof}

	\begin{theorem}\label{thm: characterization of realizability of quotients}
		Let $f:M_1 \twoheadrightarrow M_2$ be a quotient of matroids, and let $K$ be an infinite field. The following statements are equivalent:
		\begin{enumerate}
			\item $f$ is realizable over $K$,
			\item the Higgs factorization of $f$ is realizable over $K$,
			\item the Higgs major of $f$ is realizable over $K$.
		\end{enumerate}
	\end{theorem}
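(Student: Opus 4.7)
The plan is to assemble the theorem from three results already proved in the section: Proposition \ref{prop: realization elementary quotient}, Corollary \ref{cor: factorization realizable iff major realizable}, and Proposition \ref{prop: map induces factorization}. Since the statement is a three-way equivalence, I will prove the cycle (1) $\Rightarrow$ (2) $\Rightarrow$ (3) $\Rightarrow$ (1), and most of the work has already been done.

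For (1) $\Rightarrow$ (2), this is precisely Proposition \ref{prop: map induces factorization}: a realization of $f$ over the infinite field $K$ induces a realization of its Higgs factorization $(L^i(f))_i$. No further argument is needed here.

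For (2) $\Leftrightarrow$ (3), I invoke Corollary \ref{cor: factorization realizable iff major realizable}, applied to the Higgs factorization $N = (L^i(f))_i$. By the last item of Kennedy's theorem recalled in the preliminaries, the major $\mathfrak{M}(N)$ obtained from this factorization is exactly the Higgs major of $f$. So the corollary gives realizability of the Higgs factorization if and only if realizability of the Higgs major, over any infinite field $K$.

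Finally, for (2) $\Rightarrow$ (1), observe that a realization of the Higgs factorization is in particular a realization of the diagram
\[
M_1 = L^k(f) \twoheadrightarrow L^{k-1}(f) \twoheadrightarrow \dots \twoheadrightarrow L^0(f) = M_2,
\]
so by composing the successive linear maps $V_k \to V_{k-1} \to \dots \to V_0$ we obtain a single linear map $g: V_k \to V_0$ compatible with realizations $\varphi_1$ of $M_1$ and $\varphi_2$ of $M_2$; this is, by definition, a realization of the quotient $f$.

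There is no real obstacle here since the hard technical content, namely the elementary-quotient case of Proposition \ref{prop: realization elementary quotient}, the infinite-field avoidance argument of Lemma \ref{lem: standard extension contraction realizability lemma}, and the Higgs-factorization lifting of Proposition \ref{prop: map induces factorization}, has already been established. The one small point to be careful about is that the Higgs major appearing in (3) must genuinely be $\mathfrak{M}$ of the Higgs factorization; this is recorded in item (3) of Kennedy's theorem stated earlier. Given that identification, the cycle closes in a few lines.
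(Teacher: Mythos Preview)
Your proof is correct and follows essentially the same approach as the paper: (1) $\Rightarrow$ (2) via Proposition~\ref{prop: map induces factorization}, (2) $\Leftrightarrow$ (3) via Corollary~\ref{cor: factorization realizable iff major realizable}, and (2) $\Rightarrow$ (1) being immediate. Your explicit mention of Kennedy's theorem to identify $\mathfrak{M}$ of the Higgs factorization with the Higgs major is a small but welcome clarification that the paper leaves implicit.
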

	\begin{proof}
		The implication $(1) \Rightarrow (2)$ is Proposition \ref{prop: map induces factorization} and $(2) \Rightarrow (1)$ is clear. Finally, $(2) \Leftrightarrow (3)$ follows from Corollary \ref{cor: factorization realizable iff major realizable}.
	\end{proof}
	
	The above theorem can be generalized to flag matroids. A \emph{flag matroid} of length $l$ is a chain of matroid quotients of the form $M_1 \overset{f_1}{\twoheadrightarrow} M_2 \overset{f_2}{\twoheadrightarrow} \dots \overset{f_{l}}{\twoheadrightarrow} M_{l+1}$, and a flag matroid is realizable over a field $K$ if it is so as a diagram of matroids. If we factor each quotient $f_i$ with the Higgs factorization, we obtain a chain of elementary quotients that constitute a factorization of the quotient $M_1 \twoheadrightarrow M_{l+1}$ (note that this is not necessarily equal to the Higgs factorization of this last quotient). We call such a factorization the \emph{Higgs factorization} of the flag matroid, and the major of this factorization the \emph{Higgs major} of the flag matroid.
	
	\begin{corollary}\label{cor: realizability of flags}
		Let $M_1 \overset{f_1}{\twoheadrightarrow} M_2 \overset{f_2}{\twoheadrightarrow} \dots \overset{f_{l}}{\twoheadrightarrow} M_{l+1}$ be a length $l$ flag matroid, and let $K$ be an infinite field. The following statements are equivalent:
		\begin{enumerate}
			\item the flag matroid is realizable over $K$,
			\item the Higgs factorization of the flag matroid is realizable over $K$,
			\item the Higgs major of the flag matroid is realizable over $K$.
		\end{enumerate}
	\end{corollary}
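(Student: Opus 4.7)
The plan is to reduce everything to Theorem \ref{thm: characterization of realizability of quotients} applied quotient-by-quotient, plus Corollary \ref{cor: factorization realizable iff major realizable} applied once to the concatenated factorization. The implication $(2)\Rightarrow(1)$ is immediate: given a realization of the Higgs factorization of the flag, one simply reads off the realizations at the indices $M_1, M_2, \dots, M_{l+1}$ and the compositions of the linear maps between them. This produces a commutative diagram realizing the original flag matroid as a diagram of matroids.

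For $(1)\Rightarrow(2)$ I would proceed by induction on $l$, or equivalently by treating each $f_i$ separately and gluing. Fix a realization of the flag matroid, i.e.\ maps $\varphi_i:E_o\to V_i$ realizing $M_i$ together with linear maps $g_i:V_i\to V_{i+1}$ making the relevant squares commute. For each index $i$ the data $(\varphi_i,\varphi_{i+1},g_i)$ is a realization of the single quotient $f_i:M_i\twoheadrightarrow M_{i+1}$. Apply Proposition \ref{prop: map induces factorization} to each such quotient; the key point is that the construction there does not alter the realizations at the two endpoints $V_i$ and $V_{i+1}$, only inserts intermediate quotient spaces along the Higgs factorization of $f_i$ by choosing suitable vectors in $\ker g_i$. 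Hence the realizations of the Higgs factorization of $f_i$ and of $f_{i+1}$ agree at their shared endpoint $V_{i+1}$, so they glue into a realization of the concatenated Higgs factorization, which by definition is the Higgs factorization of the flag matroid.

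For $(2)\Leftrightarrow(3)$ I would simply invoke Corollary \ref{cor: factorization realizable iff major realizable} with $N$ equal to the Higgs factorization of the flag matroid and $\mathfrak{M}(N)$ equal to its Higgs major. Since that corollary is stated for arbitrary factorizations of any strong map $M_1\twoheadrightarrow M_{l+1}$, not only for the Higgs factorization of a single quotient, it applies verbatim here; the triangular diamond argument propagates realizability up to the major regardless of which factorization one starts with.

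The only place where anything is not completely formal is the gluing step in $(1)\Rightarrow(2)$, so the main thing to emphasize in writing is that the proof of Proposition \ref{prop: map induces factorization} leaves the two endpoint realizations untouched, which is precisely what makes the concatenation over $i=1,\dots,l$ go through without having to readjust the $\varphi_i$ at each step.
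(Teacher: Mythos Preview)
Your proposal is correct and follows essentially the same approach as the paper: apply Proposition \ref{prop: map induces factorization} to each $f_i$ for $(1)\Rightarrow(2)$, note $(2)\Rightarrow(1)$ is trivial, and invoke Corollary \ref{cor: factorization realizable iff major realizable} for $(2)\Leftrightarrow(3)$. In fact you are more explicit than the paper on the gluing point---that the construction in Proposition \ref{prop: map induces factorization} leaves the endpoint realizations $\varphi_i$ and $\varphi_{i+1}$ unchanged---which is exactly what is needed but only implicit in the paper's one-line proof.
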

	\begin{proof}
		For $(1) \Rightarrow (2)$ it is enough to apply Proposition \ref{prop: map induces factorization} on each quotient $f_i$. The rest of the proof is analogous to the proof of Theorem \ref{thm: characterization of realizability of quotients}.
	\end{proof}
	
	Now we interpret Theorem \ref{thm: characterization of realizability of quotients} in terms of maps of realization spaces. Let $[n] = \{ 1,\dots,n \}$, and denote by $\binom{[n]}{r}$ the subsets of $[n]$ of cardinality $r$. Let $M$ be a rank $r$ matroid on $[n]$. The \emph{realization space} of $M$ is the subset
	\[ \mathcal{R}(M) = \left\{ (p_B)_{B \in \binom{[n]}{r}} \in \Gr(r,n) : p_B \neq 0 \Leftrightarrow B \in \mathcal{B}(M) \right\} \subseteq \Gr(r,n). \]
	If $N$ is a rank $s \leq r$ matroid on $[n]$ such that $M \twoheadrightarrow N$, then the \emph{realization space} of this quotient is the subset
	\[ \begin{split}
		\mathcal{R}(M \twoheadrightarrow N) = \Big\{ (p_B)_{B \in \binom{[n]}{r} \cup \binom{[n]}{s}} \in \Fl(s,r,n) : p_B \neq 0 \Leftrightarrow B \in \mathcal{B}(M) \cup \mathcal{B}(N) \Big\} \\
		\subseteq \Fl(s,r;n) \subseteq \Gr(s,n) \times \Gr(r,n).
	\end{split} \]
	Now let $H$ be the Higgs major of $M \twoheadrightarrow N$ on the ground set $[n] \cup [\overline{k}]$ where $[\overline{k}] = \{ \overline{1},\overline{2},\dots,\overline{k} \}$. Then, there is a surjective map $\varphi: \mathcal{R}(H) \twoheadrightarrow \mathcal{R}(M \twoheadrightarrow N)$ defined by
	\[ \varphi \left( (p_B)_{B \in \binom{[n] \cup [\overline{k}]}{r}} \right) = (p_{B'})_{B' \in \binom{[n]}{r}} \times (p_{B'' \cup [\overline{k}]})_{B'' \in \binom{[n]}{s}}. \]
	In other words, $\varphi$ is projecting away the coordinates $p_B$ of $\mathcal{R}(H)$ for all $B \in \binom{[n] \cup [\overline{k}]}{r}$ that are not contained in $[n]$ and do not contain $[\overline{k}]$. Since $\mathcal{R}(M \twoheadrightarrow N) \subseteq \mathcal{R}(M) \times \mathcal{R}(N)$, a point in the realization space of $M \twoheadrightarrow N$ can be viewed as a particular pair of realizations of $M$ and $N$ respectively. Consider a realization of $H$, corresponding to a point $(p_B)_{B \in \binom{[n] \cup [\overline{k}]}{r}}$ in $\mathcal{R}(H)$. The map $\varphi$ is sending this realization of $H$ to the pair of realizations of $M$ and $N$ by identifying the realization of $M$ with the point in $\mathcal{R}(M)$ with coordinates $(p_{B'})_{B' \in \binom{[n]}{r}}$ and the realization of $N$ with the point in $\mathcal{R}(N)$ with coordinates $(p_{B'' \cup [\overline{k}]})_{B'' \in \binom{[n]}{s}}$.
	
	The fact that $\varphi$ is well defined can be verified directly from comparing the Pl\"{u}cker relations of $\mathcal{R}(H)$ and the incidence Pl\"{u}cker relations of $\mathcal{R}(M \twoheadrightarrow N)$. The surjectivity of $\varphi$ is essentially a reformulation of $(1) \Rightarrow (3)$ of Theorem \ref{thm: characterization of realizability of quotients}.

	\section{Applications and examples}\label{sec: applications}
	
	In this section, we explain how quotients of matroids relate in a natural way to relative realizability problems in tropical geometry.
	
	\subsection{Linear case}\label{sec: linear case}
	
	Let $M$ be a rank $r$ matroid with ground set $E = \{0,1,\dots,n\}$. We recall that a \emph{cycle} of a matroid is a union of circuits. Denote by $\mathcal{C}(M)$ the set of circuits of $M$, and by $\mathcal{V}(M)$ the set of cycles of $M$. A family $\mathcal{V}$ of subsets of $E$ is the set of cycles of a matroid if and only if it satisfies the following axioms \cite{brylawski1986appendix}:
	\begin{enumerate}\label{axioms: cycles}
		\item $\emptyset \in \mathcal{V}$,
		\item $\mathcal{V}$ is closed under taking unions,
		\item if $V_1,V_1 \in \mathcal{V}$ and $e \in V_1 \cap V_2$ there exists $V \in \mathcal{V}$ such that
		\[ (V_1 \cup V_2) \setminus (V_1 \cap V_2) \subseteq V \subseteq (V_1 \cup V_2) \setminus \{ e \}. \]
	\end{enumerate}
	\noindent
	For a subset $A \subseteq E$ define $\trop(A)$ to be the set of vectors $v = (v_0,\dots,v_n) \in \mathbb{R}^{n+1}$ such that the minimum of the numbers $v_i$ is attained at least twice as $i$ ranges in $A$. Since if $v \in \trop(A)$ then also $v+\lambda \mathbf{1} \in \trop(A)$ for every $\lambda \in \mathbb{R}$, where $\mathbf{1} = (1,\dots,1) \in \mathbb{R}^{n+1}$, we regard $\trop(A)$ as a subset of the quotient space $\mathbb{R}^{n+1} / \mathbf{1}\mathbb{R} \simeq \mathbb{R}^n$.
	
	\begin{proposition definition}[Tropical linear space]\label{prop-def: tropical linear space}
		Let $M$ be a matroid on $E = \{ 0,1,\dots,n \}$. The \emph{tropical linear space} of $M$ is the set
		\[ \trop(M) = \bigcap_{C \in \mathcal{C}(M)} \trop(C) = \bigcap_{V \in \mathcal{V}(M)} \trop(V) \subseteq \mathbb{R}^n. \]
	\end{proposition definition}
	\begin{proof}
		We need to prove the second equality of the above equation. The inclusion $\bigcap_{C \in \mathcal{C}(M)} \trop(C) \supseteq \bigcap_{V \in \mathcal{V}(M)} \trop(V)$ is obvious since every circuit is a cycle by definition. For the reverse inclusion, let $v \in \bigcap_{C \in \mathcal{C}(M)} \trop(C)$ and let $V$ be a cycle of $M$. It is enough to show that $v \in \trop(V)$. Suppose that the minimum of the coordinates $v_i$ of $v$, for $i$ ranging in $V$, is attained at $v_j$ for some $j \in V$. Since $V$ is a union of circuits, there exists a circuit $C$ of $M$ that contains $j$. Now, by the choice of $v$, we have $v \in \trop(C)$, so the minimum of the numbers $v_i$ for $i$ ranging in $C$ is attained at least twice. Since $C \subseteq V$ and $v_j$ was the minimum of the numbers $v_i$ for $i \in V$, the minimum in $V$ is also attained at least twice, hence $v \in \trop(V)$.
	\end{proof}
	
	We can give the set $\trop(M)$ various fan structures, and such a fan is usually called \emph{Bergman fan} (see \cite[Section 4.2]{maclagan2021introduction} or \cite{feichtner2004chow} for more information).
	
	Let $M_1$ and $M_2$ be two matroids on the common ground set $\{ 0,1,\dots,n \}$. From \cite[Proposition 7.4.7]{white1986theory} we have that $M_1 \twoheadleftarrow M_2$ if and only if every circuit of $M_2$ is a union of circuits of $M_1$ (i.e. a cycle of $M_1$). It is easy to see that this is equivalent to the condition that every cycle of $M_2$ is a cycle of $M_1$. This observation, together with Proposition-Definition \ref{prop-def: tropical linear space} imply the following fact.
	
	\begin{corollary}\label{cor: inclusion of tropical linear spaces}
		Let $M_1$ and $M_2$ be two matroids on a common ground set. We have
		\[ M_1 \twoheadleftarrow M_2 \Longleftrightarrow \trop(M_1) \subseteq \trop(M_2). \]
	\end{corollary}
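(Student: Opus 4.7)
The plan is to prove the two directions separately, leaning on the cycle characterization of quotients recalled from \cite{white1986theory} and on the formula $\trop(M) = \bigcap_{V \in \mathcal{V}(M)} \trop(V)$ of Proposition-Definition \ref{prop-def: tropical linear space}.

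For the forward direction, the observation stated just above the corollary gives $\mathcal{V}(M_2) \subseteq \mathcal{V}(M_1)$ as soon as $M_1 \twoheadleftarrow M_2$. Intersecting $\trop(V)$ over the larger indexing set can only shrink the result, so Proposition-Definition \ref{prop-def: tropical linear space} immediately yields $\trop(M_1) \subseteq \trop(M_2)$.

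The reverse direction requires more work, and I would handle it by contraposition. Suppose some circuit $C$ of $M_2$ fails to be a cycle of $M_1$; the goal is to produce a vector $v \in \trop(M_1) \setminus \trop(C)$, which will automatically lie outside $\trop(M_2)$, since $\trop(M_2) \subseteq \trop(C)$ by Proposition-Definition \ref{prop-def: tropical linear space}. The failure of $C$ to be a cycle of $M_1$ yields a coloop $a \in C$ of $M_1|C$, equivalently $a \notin \operatorname{cl}_{M_1}(C \setminus \{a\})$; a standard argument then produces a hyperplane $H$ of $M_1$ containing this closure but missing $a$, so its complement $D^* = E \setminus H$ is a cocircuit of $M_1$ with $D^* \cap C = \{a\}$. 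I would then set $v_i = -1$ for $i \in D^*$ and $v_i = 0$ otherwise. Circuit-cocircuit orthogonality ($|D \cap D^*| \neq 1$ for every circuit $D$ of $M_1$) forces the value $-1$ to appear either zero times or at least twice on each circuit of $M_1$, so $\min_{i \in D} v_i$ is attained at least twice and $v \in \trop(M_1)$; meanwhile $D^* \cap C = \{a\}$ makes $v_a = -1$ the strict minimum of $v$ on $C$, so $v \notin \trop(C)$.

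The one step that is not pure bookkeeping is the construction of the cocircuit $D^*$ meeting $C$ in exactly $\{a\}$; everything past that reduces to circuit-cocircuit orthogonality and the cycle description of $\trop(M_1)$ from Proposition-Definition \ref{prop-def: tropical linear space}.
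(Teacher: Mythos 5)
Your proposal is correct, and it is noticeably more careful than the paper's own justification. The paper dispatches the corollary in one sentence: it records that $M_1 \twoheadleftarrow M_2$ is equivalent to $\mathcal{V}(M_2) \subseteq \mathcal{V}(M_1)$ (via \cite[Proposition 7.4.7]{white1986theory}) and then says this, together with $\trop(M) = \bigcap_{V \in \mathcal{V}(M)} \trop(V)$, implies the corollary. Your forward direction is exactly that argument. But that reasoning only gives the implication $\mathcal{V}(M_2) \subseteq \mathcal{V}(M_1) \Rightarrow \trop(M_1) \subseteq \trop(M_2)$ directly; the converse — that an inclusion of the intersections forces an inclusion of the cycle families — is a genuine matroid-theoretic fact, not a formality about intersecting over bigger index sets. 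The paper leaves it to the reader; your cocircuit construction is precisely the missing content. In effect you are using the standard separating-vector trick: for a circuit $C$ of $M_2$ that is not a cycle of $M_1$, pick a coloop $a$ of $M_1|C$, extend $\operatorname{cl}_{M_1}(C\setminus a)$ to a hyperplane $H$ avoiding $a$, and take $v = -e_{E\setminus H}$. Circuit–cocircuit orthogonality puts $v$ in $\trop(M_1)$, while $|C \cap (E\setminus H)| = 1$ throws it out of $\trop(C) \supseteq \trop(M_2)$. A slightly slicker packaging of the same idea: for loop-free $M$, a proper subset $F \subsetneq E$ is a flat of $M$ if and only if $-e_{E\setminus F} \in \trop(M)$, so $\trop(M_1) \subseteq \trop(M_2)$ at once gives that every flat of $M_1$ is a flat of $M_2$, which is the definition of $M_2 \twoheadrightarrow M_1$.

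One caveat worth flagging (and which affects the statement, not your argument): the equivalence fails if $M_1$ has a loop, since then $\trop(M_1) = \emptyset$ and the inclusion $\trop(M_1) \subseteq \trop(M_2)$ is automatic while the quotient relation need not hold. Your proof quietly uses loop-freeness in the step verifying $v \in \trop(M_1)$ (a loop would be a one-element circuit on which the minimum cannot be attained twice). The paper makes the same implicit assumption, which is standard in the tropical setting, but it is good that you were forced to notice it.
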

	
	Let $I$ be a homogeneous ideal of $K[x_0,\dots,x_n]$. We denote by $\trop(V(I))$ the tropicalization of $V(I) \cap T^n$, where $T^n$ is the algebraic torus of $\mathbb{P}^n$. If $K$ is infinite and $I$ is linear then the supports of the degree one polynomials in $I$ are the cycles of a matroid $M$, called the underlying matroid of $I$ (see Lemma \ref{lem: degree d matroid} below). In this situation we have the following (set-theoretic) equality: $\trop(V(I)) = \trop(M)$ (see \cite[Chapter 4]{maclagan2021introduction}).
	
	In this paper, all tropicalizations are done with respect to the trivial valuation. By a \emph{tropical variety} here we mean a pure-dimensional balanced polyhedral complex obtained as the tropicalization of an algebraic variety, that is, an object of the form $T = \trop(V(I))$ for some homogeneous ideal $I$ of $K[x_0,\dots,x_n]$.  Let $T_1$ and $T_2$ be two tropical varieties such that $T_1 \subseteq T_2$. We say that such an inclusion is \emph{realizable} over a field $K$ if there exists $I_1$ and $I_2$ two homogeneous ideals of $K[x_0,\dots,x_n]$ such that $T_i = \trop(V(I_i))$ for $i \in \{1,2\}$ and $V(I_1) \subseteq V(I_2)$.
	
	\begin{proposition}\label{prop: linear relative realizability}
		Let $T_i = \trop(V(I_i))$ for $i \in \{ 1,2 \}$ be two tropical varieties such that $T_1 \subseteq T_2$, where $I_1$ and $I_2$ are homogeneous linear ideals of $K[x_0,\dots,x_n]$, and let $M_1$ and $M_2$ be the underlying matroids of $I_1$ and $I_2$ respectively. The inclusion $T_1 \subseteq T_2$ is realizable over $K$ if and only if so is the quotient of matroids $M_1 \twoheadleftarrow M_2$.
	\end{proposition}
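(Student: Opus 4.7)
The plan is to translate the statement into a question about nested pairs of vector subspaces of $K^{n+1}$, and then read off from the conventions introduced in Section \ref{sec: realizability} that such nestings are exactly realizations of the matroid quotient $M_1 \twoheadleftarrow M_2$. First I would record that since $T_i = \trop(V(I_i)) = \trop(M_i)$, Corollary \ref{cor: inclusion of tropical linear spaces} already ensures that the hypothesis $T_1 \subseteq T_2$ forces $M_1 \twoheadleftarrow M_2$ to be a quotient of matroids, so the second clause of the statement is meaningful.

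For the direction ($\Leftarrow$), I would start from a realization of the quotient $M_1 \twoheadleftarrow M_2$. By the convention recalled in Section \ref{sec: realizability}, this amounts to an inclusion $L_1 \subseteq L_2 \subseteq K^{n+1}$ of linear subspaces with each $L_i$ realizing $M_i$. Letting $J_i \subseteq K[x_0,\dots,x_n]$ be the ideal generated by the linear forms vanishing on $L_i$, I obtain $V(J_i) = L_i$ as a linear subspace, $V(J_1) \subseteq V(J_2)$, and $\trop(V(J_i)) = \trop(M_i) = T_i$, using that the cycles of the matroid associated to $L_i$ are exactly the supports of the degree one elements of $J_i$. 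Hence the ideals $J_1 \supseteq J_2$ realize the inclusion $T_1 \subseteq T_2$ over $K$.

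For the converse ($\Rightarrow$), I would choose homogeneous linear ideals $J_1, J_2 \subseteq K[x_0,\dots,x_n]$ over $K$ with $T_i = \trop(V(J_i))$ and $V(J_1) \subseteq V(J_2)$, and set $L_i = V(J_i) \subseteq K^{n+1}$. The inclusion $L_1 \subseteq L_2$, together with the fact that each $L_i$ realizes some matroid $M_i'$, produces a candidate realization of a quotient $M_1' \twoheadleftarrow M_2'$. The remaining step is to identify $M_i' = M_i$, which then upgrades this to a realization of the original quotient $M_1 \twoheadleftarrow M_2$ over $K$.

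The main delicate point is precisely this last identification $M_i' = M_i$, which requires that a loopless matroid on a fixed ground set be recovered from its tropical linear space. I would handle it by appealing to the standard fact that the Bergman fan of a loopless matroid determines the matroid (for example, the flats can be read off from the cone structure), applied to $\trop(M_i') = T_i = \trop(M_i)$. Apart from this, the argument is essentially a dictionary between vector subspaces of $K^{n+1}$, homogeneous linear ideals of $K[x_0,\dots,x_n]$, their tropicalizations, and the matroids they realize, which has already been set up in the preceding discussion.
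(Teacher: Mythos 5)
Your overall strategy is the same as the paper's: translate everything into nested subspaces of $K^{n+1}$ on the one side and nested homogeneous ideals on the other, and then match up realizations of the quotient $M_1 \twoheadleftarrow M_2$ with realizations of the inclusion $T_1 \subseteq T_2$. The ($\Leftarrow$) direction is essentially identical to the paper's.

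In the ($\Rightarrow$) direction there is a gap. The definition of a realizable inclusion only provides \emph{arbitrary} homogeneous ideals $J_1, J_2$ with $T_i = \trop(V(J_i))$ and $V(J_1) \subseteq V(J_2)$; it does not hand you \emph{linear} ideals. You begin by writing ``I would choose homogeneous linear ideals $J_1, J_2$,'' and your subsequent step $L_i := V(J_i) \subseteq K^{n+1}$ only produces a vector subspace because $J_i$ is linear. So you have silently assumed precisely what needs to be argued: that a variety whose tropicalization is a degree-one tropical linear space (a Bergman fan) must itself be a linear space, so that $J_i$ may be replaced by a linear ideal with the same tropicalization and the same vanishing locus. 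The paper handles this by citing \cite[Theorem 4.7]{fink2013tropical}. Apart from this, the argument goes through; your explicit appeal to the fact that a loopless matroid is recovered from its Bergman fan, to identify $M_i' = M_i$, is correct and actually makes precise a step that the paper asserts without justification.
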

	\begin{proof}
		First assume that $T_1 \subseteq T_2$ is realizable over $K$. This means that there exist two homogeneous ideals $J_1$ and $J_2$ of $K[x_0,\dots,x_n]$ such that $T_i = \trop(V(J_i))$ for $i \in \{1,2\}$ and $V(J_1) \subseteq V(J_2)$. Since $T_1$ and $T_2$ are linear tropical varieties, this implies that $J_1$ and $J_2$ are linear ideals (see for instance \cite[Theorem 4.7]{fink2013tropical}). In particular, $J_1$ and $J_2$ have the same underlying matroid of $I_1$ and $I_2$ respectively. Now let $U_i$ be the vector space of polynomials of degree one in $J_i$, for $i \in \{1,2\}$. Since $V(J_1) \subseteq V(J_2)$ and $J_1$ and $J_2$ are linear, in particular radical, we have $J_1 \supseteq J_2$, so $U_1 \supseteq U_2$, and this last inclusion is a realization of $M_1 \twoheadleftarrow M_2$ in the sense we discussed in Section \ref{sec: realizability}.
		
		Conversely, assume that $M_1 \twoheadleftarrow M_2$ is realizable. Then there exist two linear subspaces $U_1 \supseteq U_2$ of $K^{n+1}$ that realize this quotient. The vectors in $U_i$ can be thought as linear polynomials in $K[x_0,\dots,x_n]$, and we let $J_i$ be the linear homogeneous ideal generated by these polynomials, for $i \in \{1,2\}$. By construction $J_1 \supseteq J_2$, so $V(J_1) \subseteq V(J_2)$, and $T_i = \trop(V(J_i))$ as $J_i$ and $I_i$ have the same underlying matroid, for $i \in \{1,2\}$.
	\end{proof}
	
	\subsection{Tropical Fano schemes}
	
	In \cite{lamboglia2022tropical} Lamboglia studied \emph{tropical Fano schemes} and tropicalizations of Fano schemes. The Fano scheme $F_r(X)$ of a projective variety $X$ is the fine moduli space parametrizing linear spaces of dimension $r$ contained in $X$. The tropicalization $\trop(F_r(X))$ of $F_r(X)$ inside $\trop G(r+1,n+1)$ parametrizes the tropicalization of linear spaces contained in $X$. On the other hand, the tropical Fano scheme $F_r(\trop(X))$ of the tropical variety $\trop(X)$ is a tropical prevariety parametrizing tropicalized linear spaces of dimension $r$ contained in $\trop(X)$. From the definition, it is immediate to observe that
	\[ \trop(F_r(X)) \subseteq F_r(\trop(X)). \]
	However, the above inclusion might be strict, already when $X$ is just a linear space. In fact, \cite[Theorem 3.1]{lamboglia2022tropical} describes a family of planes in $\mathbb{P}^n$ with this property. Every plane $L$ of such family tropicalizes to the standard tropical plane in $\mathbb{R}^n \simeq \mathbb{R}^{n+1} / \mathbf{1}\mathbb{R}$. The tropicalization thus contains the (realizable) tropical line $\Gamma$ whose rays are $\pos(e_i,e_{i+1})$ for $i$ ranging in all the even numbers in $\{ 0,1,\dots,n-1 \}$. However, no line contained in $L$ tropicalize to $\Gamma$.
	
	Now the combination of Proposition \ref{prop: linear relative realizability} and Theorem \ref{thm: characterization of realizability of quotients} offers another perspective on the above phenomenon. In fact, since $L$ tropicalize to the standard tropical plane, its underlying matroid is $U_{3,n+1}$. Further, the underlying matroid of $\Gamma$ is the matroid $M_2$ of rank $2$ on ground set $E = \{ 0,\dots,n \}$ with no loops and parallel elements given by the disjoint pairs of consecutive numbers: $\{ 0,1 \}$, $\{ 2,3 \}$ up to $\{ n-1,n \}$ or $\{ n-2,n-1 \}$ depending on the parity of $n$. Now since $\Gamma \subseteq \trop(L)$, from Corollary \ref{cor: inclusion of tropical linear spaces} we have $U_{3,n+1} \twoheadrightarrow M_2$. The Higgs major of this quotient is a matroid $M_3 = U_{3,n+1} + e$ of rank $3$ on the ground set $\{ 0,\dots,n \} \cup \{ e \}$ where the cardinality $3$ dependent subsets are all of the form $\{ i,i+1,e \}$ where $\{ i,i+1 \}$ are parallel elements in $M_2$. Such a matroid is realizable over $\mathbb{C}$, and from Theorem \ref{thm: characterization of realizability of quotients} every realization of $M_3$ gives us a realization of the quotient $U_{3,n+1} \twoheadrightarrow M_2$, which in turn, from Proposition \ref{prop: linear relative realizability}, provides us a pair of a line contained in a plane $\ell \subseteq L$ that tropicalize to $\Gamma$ and the standard tropical plane respectively. The crucial point here is that if we fix a realization of $U_{3,n+1}$ over $\mathbb{C}$, it is not guaranteed that this realization can be extended to a realization of $M_3$. This means that we cannot realize the quotient $U_{3,n+1} \twoheadrightarrow M_2$ with this fixed realization of $U_{3,n+1}$. We further illustrate this phenomenon with the examples discussed in \cite{lamboglia2022tropical}.
	
	\begin{example}[{\cite[Example 3.4]{lamboglia2022tropical}}]
		Let $L_1 \subseteq \mathbb{P}^5$ be the plane spanned by the rows of the following matrix
		\begin{equation*}
			\begin{bmatrix}
			1 & 3 & 0 & 1 & 5 & 7 \\
			0 & 0 & 1 & 3 & -1 & -1 \\
			1 & 4 & -1 & -3 & 0 & 0
			\end{bmatrix} \in \mathbb{C}^{3,6}.
		\end{equation*}
		The above matrix realizes $U_{3,6}$ over $\mathbb{C}$, therefore the tropicalization of $L_1$ is the standard tropical plane. Now this realization can be extended to a realization of $M_3$ given by the following matrix, obtained by adding the column vector $(1,0,0) \in \mathbb{C}^3$:
		\begin{equation*}
		\begin{bmatrix}
		1 & 3 & 0 & 1 & 5 & 7 & 1 \\
		0 & 0 & 1 & 3 & -1 & -1 & 0 \\
		1 & 4 & -1 & -3 & 0 & 0 & 0
		\end{bmatrix} \in \mathbb{C}^{3,7}.
		\end{equation*}
		Now, by simply quotienting by the vector $(1,0,0) \in \mathbb{C}^3$ corresponding to the element $e$ of $M_3 = U_{3,6} + e$, we obtain the following realization of $M_2$:
		\begin{equation*}
		\begin{bmatrix}
		0 & 0 & 1 & 3 & -1 & -1 \\
		1 & 4 & -1 & -3 & 0 & 0
		\end{bmatrix} \in \mathbb{C}^{2,6}.
		\end{equation*}
		The rowspace of the above matrix is a $2$-dimensional vector space in $\mathbb{C}^6$ corresponding to a line $\ell$ in $\mathbb{P}^5$ contained in $L_1$, that tropicalizes to $\Gamma$.
	\end{example}
	
	\begin{example}[{\cite[Example 3.3]{lamboglia2022tropical}}]
		Let $L_2 \subseteq \mathbb{P}^5$ be the plane spanned by the rows of the following matrix
		\begin{equation*}
			\begin{bmatrix}
			0 & -271 & -92 & 0 & -13 & -54 \\
			0 &	-18 & -7 & -1 & 0 & -4 \\
			-1 & 12293 & 4173 & 0 & 588 & 2450
			\end{bmatrix} \in \mathbb{C}^{3,6}.
		\end{equation*}
		The above matrix realizes $U_{3,6}$ over $\mathbb{C}$, therefore the tropicalization of $L_1$ is the standard tropical plane. Now this realization cannot be extended to a realization of $M_3$. In fact, a realization of $M_3$ would be given by a matrix of the following form
		\begin{equation*}
			A =
			\begin{bmatrix}
				0 & -271 & -92 & 0 & -13 & -54 & x_1\\
				0 &	-18 & -7 & -1 & 0 & -4 & x_2 \\
				-1 & 12293 & 4173 & 0 & 588 & 2450 & x_3
			\end{bmatrix}
			.
		\end{equation*}
		In particular, the determinant of the square submatrices indexed by the dependent subsets of cardinality $3$ should be zero. Therefore we have:
		\begin{equation*}
			\begin{array}{ccccc}
				\det( A_{0,1,e} ) = & -18 x_1 & + 271 x_2 &  &= 0\\
				\det( A_{2,3,e} ) = & 4173 x_1 & & + 92 x_3 &= 0\\
				\det( A_{4,5,e} ) = & 2352 x_1 & + 98 x_2 & + 52 x_3 &= 0
			\end{array}
		\end{equation*}
		The above linear system has only one solution: $(x_1,x_2,x_3) = (0,0,0)$, but this is impossible since $M_3$ has no loops. Therefore, the realization of $U_{3,6}$ given by $L_2$ cannot be extended to a realization of $M_3$. Thus, by Theorem \ref{thm: characterization of realizability of quotients} no realization of the quotient $U_{3,6} \twoheadrightarrow M_2$ can be constructed with such a realization of $U_{3,6}$. From Proposition \ref{prop: linear relative realizability} and Theorem \ref{thm: characterization of realizability of quotients}, this implies that there is no line $\ell$ contained in $L_2$ tropicalizing to $\Gamma$.
	\end{example}

	\subsection{Nonlinear case}\label{sec: nonlinear case}
	
	Let $I$ be a homogeneous ideal of $K[x_0,\dots,x_n]$. For every $d \in \mathbb{N}$ denote by $I_d$ the the degree $d$ part of $I$, that is, the set of polynomials of degree $d$ in $I$. Let $\Mon_d$ be the set of monomials of degree $d$ in $K[x_0,\dots,x_d]$. The \emph{support} $\supp(f)$ of a polynomial $f \in K[x_0,\dots,x_n]$ is the set of monomials with nonzero coefficients in $f$.
	
	\begin{lemma}\label{lem: degree d matroid}
		Let $K$ be an infinite field. For every $d \in \mathbb{N}$ the set of supports of polynomials in $I_d$ is the set of cycles of a matroid $M(I_d)$.
	\end{lemma}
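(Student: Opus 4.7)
The plan is to verify the three cycle axioms listed at the start of Section \ref{sec: linear case} for the family $\mathcal{V}_d = \{\supp(f) : f \in I_d\}$, viewed as subsets of the ground set $\Mon_d$. The key observation throughout is that $I_d$ is a $K$-vector subspace of $K[x_0,\dots,x_n]_d$, whose elements are $K$-linear combinations of monomials in $\Mon_d$, so the support of any element of $I_d$ is well-defined.

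Axiom (1) is immediate: the zero polynomial lies in $I_d$ and has empty support. For axiom (3), the elimination axiom, given $f_1,f_2 \in I_d$ with supports $V_1,V_2$ and a monomial $e \in V_1 \cap V_2$, let $a_i \in K^*$ denote the coefficient of $e$ in $f_i$ and consider
\[ f = a_2 f_1 - a_1 f_2 \in I_d. \]
By construction, the coefficient of $e$ in $f$ vanishes, while for each monomial $m \in V_1 \setminus V_2$ the coefficient of $m$ in $f$ is $a_2$ times the coefficient of $m$ in $f_1$, hence nonzero; symmetrically for $m \in V_2 \setminus V_1$. Therefore $\supp(f)$ contains $(V_1 \cup V_2) \setminus (V_1 \cap V_2)$, is contained in $V_1 \cup V_2$, and avoids $e$, as required.

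The part that uses the hypothesis that $K$ is infinite is axiom (2), closure under unions. Given $f_1,f_2 \in I_d$ with supports $V_1$ and $V_2$, consider the one-parameter family $f_\lambda = f_1 + \lambda f_2 \in I_d$ for $\lambda \in K^*$. For any $m \in V_1 \triangle V_2$ the coefficient of $m$ in $f_\lambda$ is nonzero for every $\lambda \in K^*$. For each $m \in V_1 \cap V_2$, the coefficient of $m$ in $f_\lambda$ is an affine function of $\lambda$ that vanishes for at most one value of $\lambda$. Since $V_1 \cap V_2$ is finite and $K$ is infinite, one can pick $\lambda \in K^*$ avoiding the finite set of these bad values, and then $\supp(f_\lambda) = V_1 \cup V_2$. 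Iterating pairwise, the family $\mathcal{V}_d$ is closed under arbitrary unions of its (finitely many) elements.

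The only subtle step is axiom (2), and the hypothesis that $K$ is infinite is essential there (an analogous failure to the one exhibited in the example after Lemma \ref{lem: standard extension contraction realizability lemma} can be produced over finite fields). Once the three axioms are verified, the cited characterization from \cite{brylawski1986appendix} yields a matroid $M(I_d)$ on $\Mon_d$ whose cycles are exactly the supports of the polynomials in $I_d$.
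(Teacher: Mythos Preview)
Your proof is correct and follows essentially the same approach as the paper: verify the three cycle axioms directly, using a generic linear combination $f_1+\lambda f_2$ (with $\lambda$ chosen to avoid finitely many bad values, possible since $K$ is infinite) for the union axiom, and an elimination combination $a_2 f_1 - a_1 f_2$ for the exchange axiom. The paper's argument is identical up to notation and the order in which the axioms are treated.
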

	\begin{proof}
		We need to show that the set $\mathcal{V}$ of supports of $I_d$ satisfies the axioms \ref{axioms: cycles}. The zero vector is in $I_d$ by convention, so $\emptyset \in \mathcal{V}$ and $\mathcal{V}$ satisfies axiom $1$.
		
		For axiom $2$, let $V_1,V_2 \in \mathcal{V}$, and let $f_1,f_2 \in I_d$ such that $\supp(f_i) = V_i$ for $i \in \{ 1,2 \}$. Let $a_m$ (resp. $b_m$) be the coefficient in $f_1$ (resp. $f_2$) of the monomial $m \in \Mon_d$. Since $K$ is infinite, there exists $c \in K$ that is not equal to $-a_m / b_m$ for every $m \in \Mon_d$ such that $b_m \neq 0$. By construction, there are no cancellations in the polynomial $f = f_1 + c f_2 \in I_d$, therefore $V = \supp(f) = V_1 \cup V_2 \in \mathcal{V}$.
		
		For axiom $3$, let $V_1,V_2 \in \mathcal{S}$ and $e \in V_1 \cap V_2$, then there exists $f_1,f_2 \in I_d$ such that $V_i = \supp(f_i)$ for $i \in \{ 1,2 \}$. Now let $c_i \in K$ be the coefficient of the monomial $e$ in $f_i$  for $i \in \{ 1,2 \}$. Then the polynomial $f = f_1 - (c_1/c_2) f_2 \in I_d$ has support $V = \supp(f) \in \mathcal{V}$ such that $V \subseteq (V_1 \cup V_2) \setminus \{ e \}$ by construction, further $(V_1 \cup V_2) \setminus (V_1 \cap V_2) \subseteq V$ as no cancellation can happen outside $V_1 \cap V_2$.
	\end{proof}
	
	
	Let $\nu_{d}: \mathbb{P}^n \rightarrow \mathbb{P}^{N}$ be the degree $d$ Veronese embedding, where $N = \binom{n+d}{d}-1$. Let $U_{n,d} = \{ u \in \mathbb{N}^{n+1} :\sum_i u_i = d \}$ and let $K[z_u : u \in U_{n,d}]$ be the coordinate ring of $\mathbb{P}^N$. Now, let $I$ be a homogeneous ideal of $K[x_0,\dots,x_n]$ and choose $d \in \mathbb{N}$ such that $V(I) = V(I_d)$. We have
	\begin{equation}\label{eq: veronese linear}
	\nu_d(V(I)) = \nu_d(\mathbb{P}^n) \cap L_{I,d}
	\end{equation}
	where $L_{I,d}$ is some linear subspace of $\mathbb{P}^N$ which linear equations are obtained by substituting $x^u$ with $z_u$ for every $u \in U_{n,d}$ in the polynomials $f \in I_d$ (see \cite[Exercise 2.9]{harris2013algebraic}).	Further, note that, if $K$ is infinite, the underlying matroid of the linear space $L_{I,d}$ is $M(I_d)$, hence $\trop(L_{I,d}) = \trop(M(I_d))$.
	
	Let $A \in \mathbb{R}^{N,n+1}$ be the matrix whose rows consists of the vectors $u \in U_{n,d}$. Let $\ell_A: \mathbb{R}^{n+1} \rightarrow \mathbb{R}^N$ be the linear map given by the multiplication by $A$. Since $\ell_A(\mathbf{1}_{n+1}) = d \cdot \mathbf{1}_N$, the map $\ell_A$ induces a linear map $\trop(\nu_d): \mathbb{R}^n \rightarrow \mathbb{R}^N$ obtained by quotienting by $\mathbf{1}_{n+1}$ and identifying $\mathbb{R}^n$ with the quotient space $\mathbb{R}^{n+1} / \mathbf{1}\mathbb{R}$. Since the Veronese embedding $\nu_d$ is a monomial map, from \cite[Corollary 3.2.13]{maclagan2021introduction} we have the following result.
	
	\begin{lemma}\label{lem: veronese commutes with tropicalization}
		Let $I$ be an homogeneous ideal of $K[x_0,\dots,x_n]$. For every $d \in \mathbb{N}$ we have
		\[ \trop( \nu_{d}(V(I)) ) = \trop(\nu_d)(\trop(V(I)). \]
	\end{lemma}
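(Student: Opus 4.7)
The plan is to reduce this to the general fact that tropicalization commutes with pushforward under monomial maps of tori, stated as \cite[Corollary 3.2.13]{maclagan2021introduction}. The key observation is that, although $\nu_d$ is a morphism of projective spaces, its restriction to the algebraic torus $T^n \subset \mathbb{P}^n$ is precisely the monomial map $x \mapsto (x^u)_{u \in U_{n,d}}$ with exponent matrix $A$, so the standard toric tropicalization result applies verbatim once we pass to the torus.

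First I would check that $\nu_d^{-1}(T^N) = T^n$. A point of $\mathbb{P}^n$ maps into $T^N$ if and only if all its coordinates $x^u$ for $u \in U_{n,d}$ are nonzero, which happens exactly when all $x_i$ are nonzero. Consequently $\nu_d(V(I)) \cap T^N = \nu_d(V(I) \cap T^n)$, so by definition of the tropicalization of subvarieties of projective space we obtain $\trop(\nu_d(V(I))) = \trop(\nu_d(V(I) \cap T^n))$.

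Second, I would invoke \cite[Corollary 3.2.13]{maclagan2021introduction} applied to the monomial map $\nu_d|_{T^n}: T^n \to T^N$ and the subvariety $V(I) \cap T^n \subseteq T^n$, concluding that $\trop(\nu_d(V(I) \cap T^n)) = \ell_A(\trop(V(I) \cap T^n))$ at the level of the ambient vector spaces $\mathbb{R}^{n+1}$ and $\mathbb{R}^N$. Finally, the relation $\ell_A(\mathbf{1}_{n+1}) = d \cdot \mathbf{1}_N$ ensures that $\ell_A$ descends to the induced map $\trop(\nu_d)$ on the quotient by $\mathbf{1}\mathbb{R}$, after identifying $\mathbb{R}^{n+1} / \mathbf{1}\mathbb{R}$ with $\mathbb{R}^n$ as in the statement, and chaining the three identifications yields the desired equality.

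Since the lemma is essentially a direct specialization of the standard monomial-map tropicalization theorem, there is no genuine obstacle. The only points requiring care are the verification that $\nu_d$ maps $T^n$ into $T^N$ (so that the torus-theoretic setup of \cite{maclagan2021introduction} applies) and the compatibility of the affine linear map $\ell_A$ with the projective quotient by $\mathbf{1}\mathbb{R}$ on both source and target.
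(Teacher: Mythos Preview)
Your proposal is correct and follows exactly the paper's approach: the paper simply notes that the Veronese embedding is a monomial map and invokes \cite[Corollary 3.2.13]{maclagan2021introduction}. Your write-up adds the explicit verification that $\nu_d^{-1}(T^N) = T^n$ and the compatibility of $\ell_A$ with the quotient by $\mathbf{1}\mathbb{R}$, which are the details the paper leaves implicit.
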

	
	\begin{lemma}\label{lem: veronese trop distribution}
		Let $I$ be a homogeneous ideal of $K[x_0,\dots,x_n]$ and let $d \in \mathbb{N}$ such that $\trop(V(I)) = \bigcap_{f \in I_d} \trop(V(f))$ and $V(I) = V(I_d)$. We have
		\[ \trop(\nu_d(V(I)) = \trop(\nu_d(\mathbb{P}^n)) \cap \trop(L_{I,d}). \]
	\end{lemma}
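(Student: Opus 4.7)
The plan is to prove the two inclusions separately. The inclusion $\subseteq$ is immediate: by applying $\trop$ to the identity $\nu_d(V(I)) = \nu_d(\mathbb{P}^n) \cap L_{I,d}$ from \eqref{eq: veronese linear} and using the general fact that the tropicalization of an intersection of varieties is contained in the intersection of their tropicalizations, one gets
\[ \trop(\nu_d(V(I))) = \trop(\nu_d(\mathbb{P}^n) \cap L_{I,d}) \subseteq \trop(\nu_d(\mathbb{P}^n)) \cap \trop(L_{I,d}). \]

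For the reverse inclusion $\supseteq$, I would first rewrite both tropical Veronese images using Lemma \ref{lem: veronese commutes with tropicalization}. This yields $\trop(\nu_d(V(I))) = \trop(\nu_d)(\trop(V(I)))$ and, applied to $\mathbb{P}^n$, $\trop(\nu_d(\mathbb{P}^n)) = \trop(\nu_d)(\mathbb{R}^n)$. The task then reduces to showing that every $w \in \trop(\nu_d)(\mathbb{R}^n) \cap \trop(L_{I,d})$ is of the form $w = \trop(\nu_d)(v)$ for some $v$ that in fact lies in $\trop(V(I))$. Pick any preimage $v \in \mathbb{R}^n$ of $w$ under $\trop(\nu_d)$; by the description of $\trop(\nu_d)$ as coming from the matrix $A$ whose rows are the $u \in U_{n,d}$, we have $w_u = u \cdot v$ for every $u \in U_{n,d}$.

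By the hypothesis $\trop(V(I)) = \bigcap_{f \in I_d} \trop(V(f))$, it suffices to check that $v \in \trop(V(f))$ for every $f \in I_d$. This is where Lemma \ref{lem: degree d matroid} and the matroidal description of tropical linear spaces enter: the noted identity $\trop(L_{I,d}) = \trop(M(I_d))$ combined with Proposition-Definition \ref{prop-def: tropical linear space} says that $w \in \trop(L_{I,d})$ exactly when, for every cycle $V$ of $M(I_d)$, the minimum of the coordinates $w_u$ for $u \in V$ is attained at least twice. Taking $V = \supp(f)$ (which is a cycle of $M(I_d)$ by Lemma \ref{lem: degree d matroid}) and substituting $w_u = u \cdot v$, this is precisely the tropical vanishing condition for $f = \sum_u a_u x^u$ at $v$, i.e.\ $v \in \trop(V(f))$.

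The main obstacle, and really the only content of the argument, is the translation between the two viewpoints: reading the hyperplane conditions cutting out $\trop(L_{I,d})$ from the matroid $M(I_d)$ as tropical hypersurface conditions on $v$ for the original polynomials $f \in I_d$. Once the supports of degree $d$ polynomials in $I$ are correctly identified with the cycles of $M(I_d)$ via Lemma \ref{lem: degree d matroid}, the equality of the two tropical balancing conditions is immediate and the proof concludes.
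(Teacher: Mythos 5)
Your proof is correct and follows essentially the same route as the paper's: forward inclusion from \eqref{eq: veronese linear}, reverse inclusion by lifting $w \in \trop(\nu_d(\mathbb{P}^n)) \cap \trop(L_{I,d})$ through $\trop(\nu_d)$ via Lemma \ref{lem: veronese commutes with tropicalization} and then reading the membership in $\trop(L_{I,d})$ as the tropical hypersurface condition for each $f \in I_d$ on the preimage. The paper states the last step more tersely; you spell out the matroidal translation via cycles of $M(I_d)$, but the substance is identical.
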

	\begin{proof}
		From \eqref{eq: veronese linear} we have $\trop(\nu_d(V(I)) \subseteq \trop(\nu_d(\mathbb{P}^n)) \cap \trop(L_{I,d})$. For the reverse inclusion, let $v \in \trop(\nu_d(\mathbb{P}^n)) \cap \trop(L_{I,d})$. From Lemma \ref{lem: veronese commutes with tropicalization} we have that $v \in \trop(\nu_d(\mathbb{P}^n)) = \trop(\nu_d)(\mathbb{R}^n)$, that is $v$ is in the image of $\trop(\nu_d)$, so there exists $w \in \mathbb{R}^{n+1} / \mathbf{1} \mathbb{R} \simeq \mathbb{R}^n$ such that $v = \trop(\nu_d)(w)$. Now $v = \trop(\nu_d)(w) \in \trop(L_{I,d})$ implies that the minimum in $\trop(f)(w)$ is achieved at least twice for every $f \in I_d$, that is $w \in \bigcap_{f \in I_d} \trop(V(f)) = \trop(V(I))$. Hence $v = \trop(\nu_d)(w) \in \trop(\nu_d)(\trop(V(I))) = \trop(\nu_d(V(I)))$, where in the last equality we applied Lemma \ref{lem: veronese commutes with tropicalization}.
	\end{proof}

	\begin{proposition}\label{prop: quotient implies inclusion}
		Let $K$ be an infinite field, and let $I$ and $J$ be two homogeneous ideals of $K[x_0,\dots,x_n]$. There exists $d_0 \in \mathbb{N}$ such that for every $d \geq d_0$ we have
		\[ M(I_d) \twoheadleftarrow M(J_d) \Longrightarrow \trop(V(I)) \subseteq \trop(V(J)). \]
	\end{proposition}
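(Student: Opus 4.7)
The plan is to combine the three lemmas on the Veronese embedding already established to reduce the nonlinear statement to the linear picture. The embedding $\nu_d$ identifies $V(I)$ with the intersection of the Veronese image $\nu_d(\mathbb{P}^n)$ with the linear subspace $L_{I,d}$ of $\mathbb{P}^N$, and for $d$ sufficiently large this degree $d$ data already determines both $V(I)$ and $\trop(V(I))$. Since the underlying matroid of $L_{I,d}$ is $M(I_d)$, the hypothesis $M(I_d) \twoheadleftarrow M(J_d)$ is precisely the combinatorial shadow of an inclusion of tropical linear spaces that we can push through the Veronese.

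First I would choose $d_0$ large enough so that for every $d \geq d_0$ the following four conditions hold simultaneously: $V(I) = V(I_d)$, $V(J) = V(J_d)$, $\trop(V(I)) = \bigcap_{f \in I_d} \trop(V(f))$, and $\trop(V(J)) = \bigcap_{f \in J_d} \trop(V(f))$. The existence of such $d_0$ follows from a Castelnuovo--Mumford regularity argument together with the Alessandrini--Nesci style result on sufficient degree generators cutting out tropical varieties (this is the content of the commented-out statement in the text). With this choice fixed, assume $M(I_d) \twoheadleftarrow M(J_d)$. By Corollary \ref{cor: inclusion of tropical linear spaces} this is equivalent to $\trop(M(I_d)) \subseteq \trop(M(J_d))$, and since the underlying matroid of $L_{I,d}$ is $M(I_d)$ (and likewise for $J$), I obtain $\trop(L_{I,d}) \subseteq \trop(L_{J,d})$. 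Intersecting with $\trop(\nu_d(\mathbb{P}^n))$ and invoking Lemma \ref{lem: veronese trop distribution} yields
\[ \trop(\nu_d(V(I))) \subseteq \trop(\nu_d(V(J))). \]

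Next, Lemma \ref{lem: veronese commutes with tropicalization} lets me rewrite both sides as $\trop(\nu_d)(\trop(V(I)))$ and $\trop(\nu_d)(\trop(V(J)))$ respectively. To conclude $\trop(V(I)) \subseteq \trop(V(J))$ it suffices to know that the linear map $\trop(\nu_d) : \mathbb{R}^n \to \mathbb{R}^N$ is injective. This is immediate: the matrix $A$ whose rows are the exponent vectors $u \in U_{n,d}$ has among its rows $d \cdot e_0, \dots, d \cdot e_n$, so it has full column rank $n+1$, and hence the map it induces on the quotient $\mathbb{R}^{n+1}/\mathbf{1}\mathbb{R}$ remains injective.

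The main obstacle is securing the uniform $d_0$ in the first step. Each requirement on its own (the ideal being generated in degree $d$, or its tropicalization being cut out by the degree $d$ piece) is standard, but one must invoke both results simultaneously for both $I$ and $J$; this is also where the hypothesis that $K$ is infinite enters in an essential way through Lemma \ref{lem: degree d matroid}. Once $d_0$ is fixed, the rest of the argument is a clean three-step chase through the Veronese lemmas, closed off by a trivial injectivity check.
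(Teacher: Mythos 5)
Your proposal matches the paper's proof essentially step for step: choose $d_0$ via the Alessandrini--Nesci bound so that the hypotheses of Lemma \ref{lem: veronese trop distribution} hold for both $I$ and $J$, pass from the matroid quotient to the inclusion of tropical linear spaces $\trop(L_{I,d}) \subseteq \trop(L_{J,d})$ via Corollary \ref{cor: inclusion of tropical linear spaces}, intersect with $\trop(\nu_d(\mathbb{P}^n))$ and apply Lemmas \ref{lem: veronese trop distribution} and \ref{lem: veronese commutes with tropicalization}, and conclude by injectivity of $\trop(\nu_d)$. Your added remark on why $\trop(\nu_d)$ is injective (full column rank of $A$, compatible with the quotient by $\mathbf{1}$) fills in a detail the paper leaves implicit, but the route is the same.
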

	\begin{proof}
		By applying \cite[Theorem 3.7]{alessandrini2013tropicalization} we can choose $d_0 \in \mathbb{N}$ sufficiently large such that for every $d \geq d_0$ the hypothesis of Lemma \ref{lem: veronese trop distribution} are satisfied for both $I$ and $J$. Since the underlying matroid of $L_{I,d}$ (resp. $L_{J,d}$) is $M(I_d)$ (resp. $M(J_d)$), from Proposition-Definition \ref{prop-def: tropical linear space} we have $\trop(L_{I,d}) \subseteq \trop(L_{J,d})$. By intersecting with $\trop(\nu_d(\mathbb{P}^n))$ and applying Lemma \ref{lem: veronese commutes with tropicalization} and Lemma \ref{lem: veronese trop distribution} we obtain
		\[ \trop(\nu_d)(\trop(V(I))) \subseteq \trop(\nu_d)(\trop(V(J))) \]
		which implies $\trop(V(I)) \subseteq \trop(V(J))$ since $\trop(\nu_d)$ is injective.
	\end{proof}
	
	The reverse implication of the above proposition is not true in general, as the following example shows.
	
	\begin{example}\label{ex: standard tropical line}
		Let $I = (x_0+x_1+x_2)$ and $J = (x_0^p+x_1^p+x_2^p)$ be two homogeneous ideals in $K[x_0,x_1,x_2]$, for some prime $p$. The tropicalization of the varieties of these two ideals is (set-theoretically) the same: the standard tropical line in $\mathbb{R}^2$. In this situation, the number $d_0$ of Proposition \ref{prop: quotient implies inclusion} can be chosen equal to $p$, and we have that, if $K$ has characteristic not equal to $p$, $M(I_p)$ is not a quotient of $M(J_p)$. 
	\end{example}
	
	\begin{remark}\label{rmk: symmetric powers}
		Another situation in which the reverse implication of Proposition \ref{prop: quotient implies inclusion} does not hold concerns \emph{symmetric powers} of matroids. In fact, as explained in \cite[Remark 2.18]{anderson2023matroid}, if a matrix $A$, with coefficients in some field $K$, realizes a matroid $M$, the $d$-th Macaulay matrix of $A$ realizes a $d$-th symmetric power of $M$. However, such a $d$-th symmetric power depends both on the characteristic of $K$ and the matrix $A$ we started with (not just on the matroid $M$). Thus, from two distinct realizations of $M$, we obtain two distinct realizable symmetric powers of $M$. These two matroids will then be the degree $d$ part of two ideals tropicalizing to the same tropical variety, despite the fact that the two underlying matroids in degree $d$ are distinct (so one is not the quotient of the other).
	\end{remark}
	
	
	\begin{remark}
		While Proposition \ref{prop: linear relative realizability} essentially reduces the relative realizability problem for tropical varieties of degree one to a problem of realizability of matroid quotients, the same cannot be done directly for tropical varieties of higher degrees. Example \ref{ex: standard tropical line} is already an example of this. One could still try to recover some non-realizability certificate from the non realizability of a finite list of matroid quotients, although an algorithmic implementation of such a procedure might not be feasible in practice.
	\end{remark}

	\section{Questions and future work}\label{sec: questions}
	
	Theorem \ref{thm: characterization of realizability of quotients} and Corollary \ref{cor: realizability of flags} characterize realizability of a (single) quotient of matroids and a flag of matroids in terms of realizability of a single matroid. It is natural to ask if something similar can be done more generally for a diagram of matroids.
	
	\begin{question}
		Can we characterize the realizability of a diagram of matroids in terms of the realizability of a single matroid?
	\end{question}
	
	Here we restricted ourselves to (classical) matroids. In light of the connection with relative realizability problems in tropical geometry, it is natural to ask if a similar theory could be developed for valuated matroids. Quotients of valuated matroids were studied in \cite{brandt2021tropical} and \cite{borzi2023linear}. In particular, one could try to define a \emph{valuated} Higgs lift, from which the notions of Higgs factorization and major could be constructed.
	
	\begin{question}
		Is there an analogue notion of Higgs lift for quotients of valuated matroids?
	\end{question}

	Let $\Mon_d$ be the set of monomials of degree $d$ in $K[x_0,\dots,x_n]$. Let $M_1$ and $M_2$ be two matroids on the ground set $\Mon_d$ for some $d >0$. Denote by
	\[ T_i = \trop(\nu_d)^{-1} \big( \trop(M_i) \cap \trop(\nu_d(\mathbb{P}^n) \big) \]
	for $i \in \{1,2\}$. In light of Remark \ref{rmk: symmetric powers} it is natural to ask the following question.
	
	\begin{question}
		Under which conditions on $M_1$ and $M_2$ we have $T_1 = T_2$?
	\end{question}

	\bibliographystyle{plain}
	\bibliography{Reference.bib}
	
	\vspace{0.25cm}
	\noindent
	\textsc{Alessio Borz\`i, MPI MiS Leipzig, Inselstraße 22, 04103 Leipzig, Germany}\\
	\textit{Email:} alessio.borzi@mis.mpg.de 
	\vspace{0.25cm}
	
\end{document}